\newcommand{\ignore}[1]{}
\newcommand{\N}{{\mathds N}}
\newcommand{\R}{{\mathds R}}
\newcommand{\Z}{{\mathds Z}}
\newcommand{\Var}{{\rm Var}}
\newcommand{\Cov}{{\rm Cov}}
\newcommand{\corr}{{\rm corr}}
\DeclareMathOperator{\E}{E}
\DeclareMathOperator{\e}{e}
\DeclareMathOperator{\rank}{rank}
\newtheorem{Satz}{Theorem}
\newtheorem*{Satz*}{Theorem A}
\newtheorem{Assumption}{Assumption}
\newtheorem{Lem}{Lemma}
\theoremstyle{definition}
\newtheorem{Exa}{Example}
\begin{document}

\title[Subsampling under Long Range Dependence]{Subsampling for General Statistics under Long Range Dependence with application to change point analysis}

\author{Annika Betken}
\thanks{
supported by the German National Academic Foundation, Collaborative Research Center SFB 823 {\em Statistical modelling of nonlinear dynamic processes}.}
\address{Ruhr-Universit\"at Bochum, Germany}
\email{annika.betken@rub.de}

\author{Martin Wendler}
\address{Ernst-Moritz-Arndt-Universit\"at Greifswald, Germany}
\email{martin.wendler@uni-greifswald.de}

\begin{abstract} In the statistical inference for long range dependent time series the shape of the limit distribution typically depends on unknown parameters. Therefore, we propose to use subsampling. We show the validity of subsampling for general statistics and long range dependent subordinated Gaussian processes which satisfy mild regularity conditions. We apply our method to a self-normalized change-point test statistic so that we can test for structural breaks in long range dependent time series without having to estimate any nuisance parameter. The finite sample properties are investigated in a simulation study. We analyze three data sets and compare our results to the conclusions of other authors.
\end{abstract}

\keywords{Subsampling; Gaussian Processes; Long Range Dependence; Change-Point Test}

\subjclass[2010]{60G15; 62G09; 60G22}

\maketitle

\section{Introduction}\label{sec:intro}

\subsection{Long Range Dependence}

While most statistical research is done for independent data or short memory time series, in many applications there are also time series with long memory in the sense of slowly decaying correlations: in hydrology (starting with the work of Hurst \citep{Hurst1956}), in finance (e.g. Lo \cite{Lo1989}), in the analysis of network traffic (e.g. Leland, Taqqu, Willinger and Wilson \citep{Leland1994}) and in many other fields of research.

As model of dependent time series we will consider subordinated Gaussian processes: Let $(\xi_n)_{n\in\N}$ be a stationary sequence of centered Gaussian variables with $\Var(\xi_n)=1$ and covariance function $\gamma$ satisfying
\begin{align}\label{autocovariances}
\gamma(k):=\Cov(\xi_1,\xi_{k+1})=k^{-D}L_\gamma(k)
\end{align}    
for $D>0$ and a slowly varying function $L_{\gamma}$. If $D<1$,  the spectral density $f$ of $(\xi_n)_{n\in\N}$ is not continuous, but has a pole at $0$. The spectral density has the form
\begin{equation*}
f(x)=|x|^{D-1}L_f(x)
\end{equation*} 
for a function $L_f$ which is slowly varying at the origin (see Proposition 1.1.14 in Pipiras and Taqqu \citep{TaqquPipiras2011}).

Furthermore, let $G:\R\rightarrow\R$ be a measurable function such that $\E[G^2(\xi_1)]<\infty$. The stochastic process $(X_n)_{n\in\N}$ given by
\begin{equation*}
X_n:=G(\xi_n)
\end{equation*}
is called long range dependent if $\sum_{n=0}^\infty|\Cov(X_1,X_{n+1})|=\infty$, and short range dependent if $\sum_{n=0}^\infty|\Cov(X_1,X_{n+1})|<\infty$.

In limit theorems for the partial sum $S_n=\sum_{i=1}^nX_i$, the normalization  and the shape of the limit distribution not only depend  on the decay of the covariances $\gamma(k)$ as $k\rightarrow\infty$, but also on the function $G$. More precisely, Taqqu \citep{Taqqu1979} and Dobrushin and Major \citep{DobrushinMajor1979} independently proved that
\begin{equation*}
\frac{1}{L_\gamma(n)^{r/2}n^{H}}\sum_{i=1}^n\left(X_i-\E[X_i]\right)\Rightarrow C(r,H)g_rZ_{r,H}(1)
\end{equation*}
if the Hurst parameter $H:=\max\{1-\frac{rD}{2},\frac{1}{2}\}$ is greater than $\frac{1}{2}$. Here, $r$ denotes the Hermite rank of the function $G$, $C(r, H)$  is a constant, $g_r$ is the first non-zero coefficient in the expansion of $G$ as a sum of Hermite polynomials and $Z_{r,H}$ is a Hermite process. For more details on Hermite polynomials and limit theorems for subordinated Gaussian processes we recommend the book of Pipiras and Taqqu \citep{TaqquPipiras2011}. In this case ($rD<1$), the process $(X_n)_{n\in\N}$ is long range dependent as the covariances are not summable. Note that the limiting random variable $C(r,H)Z_{r,H}(1)$ is Gaussian only if the Hermite rank $r=1$.

If $rD=1$, the process $(X_n)_{n\in\N}$ might be short or long range dependent according to the slowly varying function $L_{\gamma}$. If $rD>1$, the process is short range dependent. In this case, the partial sum $\sum_{i=1}^n(X_i-\E[X_i])$ has (with  proper normalization) always a Gaussian limit. 

There are other models for long memory processes: Fractionally integrated autoregressive moving average processes can show long range dependence, see Granger and Joyeux \citep{Granger1980}. General linear processes with slowly decaying coefficients were studied by Surgailis \citep{Surgailis1982}.

\subsection{Subsampling}

For practical applications the parameters $D$, $r$ and the slowly varying function $L_\gamma$ are unknown and thus the scaling needed in the limit theorems and the shape of the asymptotic distribution are not known, either. That makes it difficult to use the asymptotic distribution for statistical inference. The situation gets even more complicated if one is not interested in partial sums, but in nonlinear statistical functionals. For example, $U$-statistics can have a limit distribution which is a linear combination of random variables related to different Hermite ranks, see Beutner and Z\"ahle \citep{BeutnerZaehle2014}. Self-normalized statistics typically converge to quotients of two random variables (e.g. McElroy and Politis \citep{McelroyPolitis2007}). The change-point test proposed by Berkes, Horv\'{a}th, Kokoszka and Shao \citep{Berkes2006} converges  to the supremum of a fractional Brownian bridge under the alternative hypothesis.

To overcome the problem of the unknown shape of the limit distribution and to avoid the estimation of nuisance parameters, one would like to use nonparametric methods. However, Lahiri \citep{Lahiri1993} has shown that the popular moving block bootstrap might fail under long range dependence. Another nonparametric approach is subsampling (also called sampling window method), first studied by Politis and Romano \citep{Politisromano1994l}, Hall and Jing \citep{HallJing1996}, and Sherman and Carlstein \citep{ShermanCarlstein1996}. The idea is the following: Let $T_n=T_n(X_1,\ldots,X_n)$ be a series of statistics converging in distribution to a random variable $T$. However,  as we typically just have one sample, we observe only one realization of $T_n$ and therefore cannot estimate the distribution of $T_n$. If $l=l_n$ is a sequence with $l_n\rightarrow\infty$ and $l_n=o(n)$, then $T_{l}$ also converges in distribution to $T$ and we have multiple (though dependent) realizations  $T_l(X_1,\ldots,X_l)$, $T_l(X_2,\ldots,X_{l+1})$,$\ldots$, $T_l(X_{n-l+1},\ldots,X_n)$, which  can be used to calculate the empirical distribution function.

Note that we do not need to know the limit distribution. In our example (self-normalized change point test statistic, see Section 3), the shape of the distribution depends on two unknown parameters, but we can still apply subsampling. However, for other statistics, one needs an unknown scaling to achieve convergence. If this is the case, one has to estimate the scaling parameters before applying subsampling.

Under long range dependence the validity of subsampling for the sample mean $\bar{X}=\frac{1}{n}\sum_{i=1}^nX_i$ has been investigated in the literature starting with Hall, Jing and Lahiri \citep{Halletal1998} for subordinated Gaussian processes. Nordman and Lahiri \citep{NordmanLahiri2005} and Zhang, Ho, Wendler and Wu \citep{Zhangetal2013} studied linear processes with slowly decaying coefficients. For the case of Gaussian processes an alternative proof  can be found in the book of Beran, Feng, Ghosh and  Kulik \citep{BeranFengGhoshKulik2013}.

It was noted by Fan \citep{Fan2012} that the proof in \citep{Halletal1998} can be easily generalized to other statistics than the sample mean. However, the assumptions on the Gaussian process are restrictive (see also  \citep{McelroyPolitis2007}). Their conditions imply that the sequence $(\xi_n)_{n\in\N}$ is completely regular, which might hold for some special cases (see Ibragimov and Rozanov \citep{IbragimovRozanov1978}), but excludes many examples:

\begin{Exa}[Fractional Gaussian Noise]\label{exa_fGn} Let $(B_H(t))_{t\in[0,\infty)}$ be a fractional Brownian motion, i.e. a centered, self-similar Gaussian process with covariance function
\begin{equation*}
\E\left[B_H(t)B_H(s)\right]=\frac{1}{2}\left(|t|^{2H}+|s|^{2H}-|t-s|^{2H}\right)
\end{equation*}
for some $H\in(\frac{1}{2},1)$. Then, $(\xi_n)_{n\in\N}$ given by $\xi_n=B_H(n)-B_H(n-1)$ is called fractional Gaussian noise. By self-similarity we have
\begin{align*}
\corr\bigg(\sum_{i=1}^n\xi_{i},\sum_{j=2n+1}^{3n}\xi_{j}\bigg)
&=\corr\left(B_H(n),B_H(3n)-B_H(2n)\right)\\
&=\corr\left(B_H(1),B_H(3)-B_H(2)\right).
\end{align*}
As a result,  the correlations of linear combinations of observations in the past and future do not vanish if the gap between past and future grows. Thus, fractional Gaussian noise is not completely regular.
\end{Exa}

Jach, McElroy and Politis \citep{Jachetal2012} provided a more general result on the validity of subsampling. They assume that the function $G$ has Hermite rank 1, that $G$ is invertible and Lipschitz-continuous and that the process $(\xi_n)_{n\in\N}$ has a causal representation as a functional of an independent sequence of random variables. These assumptions are difficult to check in practice. Moreover, although not explicitly stated in \citep{Jachetal2012}, the statistic $T_n$ has to be Lipschitz-continuous (uniformly in $n$),  which is not satisfied by many robust estimators (see Section \ref{sec:appl} for an example).

The main aim of this paper is to establish the validity of the subsampling method for general statistics $T_n$ without any assumptions on the continuity of the statistic, on the function $G$ and only mild assumptions on the Gaussian process $(\xi_n)_{n\in\N}$. Independently of our research, similar theorems have been proved by Bai, Taqqu and Zhang \citep{bai2016unified}. We will discuss their results after our main theorem in Section \ref{sec:main}. In Section \ref{sec:appl} we will apply our theorem to a self-normalized, robust change-point statistic. The finite sample properties of this test will be investigated  in a simulation study in Section \ref{sec:simu}. Finally, the proof of the main result and the lemmas needed can be found in Section \ref{sec:proof}.

\section{Main Results}\label{sec:main}

\subsection{Statement of the Theorem}

For a statistic $T_n=T_n(X_1,\ldots,X_n)$ the subsampling estimator $\hat{F}_{l,n}$ of the distribution function $F_{T_n}$ with $F_{T_n}(t)=P(T_n\leq t)$ is defined in the following way: For $t\in\R$
let
\begin{equation*}
\hat{F}_{l,n}(t)=\frac{1}{n-l+1}\sum\limits_{i=1}^{n-l+1}1_{\left\{T_l(X_i,\ldots,X_{i+l-1})\leq t\right\}}.
\end{equation*}
Our first assumption guarantees the convergence of the distribution function  $F_{T_n}$:
\begin{Assumption}\label{convergence of T_n}
 $(X_n)_{n\in\N}$ is a stochastic process and $(T_n)_{n\in\N}$ is a sequence of statistics such that $T_n\Rightarrow T$ in distribution as $n\rightarrow\infty$ for a random variable $T$ with distribution function $F_T$.
\end{Assumption}
This is a standard assumption for subsampling, see for example  \citep{Politisromano1994l}. If the distribution does not converge, we cannot expect the distribution of $T_l$ to be close to the distribution of $T_n$.

Next, we will formulate our conditions on the sequence  of random variables $(X_n)_{n\in\N}$:
\begin{Assumption}\label{dgp}
 $X_n=G(\xi_n)$ for a measurable function $G$ and a stationary, Gaussian process $(\xi_n)_{n\in\N}$ with covariance function
\begin{equation*}
\gamma(k):=\Cov(\xi_1,\xi_{1+k})=k^{-D}L_{\gamma}(k)
\end{equation*}
such that the following conditions hold:
\begin{enumerate}
\item $D\in(0,1]$ and $L_\gamma$ is a slowly varying function with
\begin{align*} 
\max\limits_{\tilde{k}\in\{k+1,\ldots,k+2l'-1\}}\left|L_{\gamma}(k)-L_{\gamma}(\tilde{k})\right|\leq K\frac{l'}{k}\min\left\{L_\gamma(k),1\right\}
\end{align*}
for a constant $K<\infty$ and all $l'\in\{l_k,\ldots,k\}$.
\item $(\xi_n)_{n\in\N}$ has a spectral density $f$ with $f(x)=|x|^{D-1}L_f(x)$ for a slowly varying function $L_f$ which is bounded away from $0$ on  $\left[0, \pi\right]$ such that $\lim_{x\rightarrow0}L_f(x)\in(0,\infty]$ exists.
\end{enumerate}
\end{Assumption}
While we have some regularity conditions on the underlying Gaussian process $(\xi_n)_{n\in\N}$, we do not impose any conditions on the function $G$: no finite moments or continuity are required, so that our results are applicable for heavy-tailed random variables and  robust test statistics. In the next subsection we will show that Assumption \ref{dgp} holds for some standard examples of long range dependent Gaussian processes.

Furthermore, we need a restriction on the growth rate of the block length $l$:
\begin{Assumption}\label{blocklength} Let $(l_n)_{n\in\N}$ be a non-decreasing sequence of integers such that $l=l_n\rightarrow\infty$ as $n\rightarrow\infty$ and $l_n=\mathcal{O}\big(n^{(1+D)/2-\epsilon}\big)$ for some $\epsilon>0$.
\end{Assumption}
If the dependence of the underlying process $(\xi_n)_{n\in\N}$ gets stronger, the range of possible values for $l$ gets smaller. A popular choice for the block length is $l\approx C\sqrt{n}$ (see for example  \citep{Halletal1998}), which is allowed for all $D\in(0,1]$. Now, we can state our main result:
\begin{Satz}\label{main result}
Under  Assumptions \ref{convergence of T_n}, \ref{dgp} and \ref{blocklength}  we have
\begin{equation*}
F_{T_n}(t)-\hat{F}_{l,n}(t)\overset{\mathcal{P}}{\longrightarrow}0
\end{equation*}
as $n\rightarrow\infty$ for all points of continuity $t$ of $F_T$.
If $F_T$ is continuous, then
\begin{equation*}
\sup\limits_{t\in \mathbb{R}}\left|F_{T_n}(t)-\hat{F}_{l,n}(t)\right|\overset{\mathcal{P}}{\longrightarrow}0.
\end{equation*}
\end{Satz}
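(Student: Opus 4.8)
The plan is to run the classical subsampling argument of Politis and Romano, with the long-range dependence absorbed into a single covariance estimate for functionals of the underlying Gaussian blocks. Throughout write $N=n-l+1$ and $Y_i=1_{\{T_l(X_i,\ldots,X_{i+l-1})\le t\}}$, so that $\hat F_{l,n}(t)=\frac1N\sum_{i=1}^NY_i$. Since $l=l_n\to\infty$, Assumption \ref{convergence of T_n} also gives $T_l\Rightarrow T$, hence $F_{T_l}(t)\to F_T(t)$ and $F_{T_n}(t)\to F_T(t)$ at every continuity point $t$ of $F_T$. Consequently it is enough to show $\hat F_{l,n}(t)-F_{T_l}(t)\overset{\mathcal P}{\longrightarrow}0$. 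By stationarity of $(\xi_n)_{n\in\N}$ each block statistic has the same law as $T_l(X_1,\ldots,X_l)$, so $\E[\hat F_{l,n}(t)]=F_{T_l}(t)$ and the estimator is exactly unbiased; the task reduces to a weak law of large numbers $\Var(\hat F_{l,n}(t))\to0$ for the bounded, stationary, long-range dependent array $(Y_i)_i$.

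To control the variance I would expand $\Var(\hat F_{l,n}(t))=\frac1{N^2}\sum_{i,j}\Cov(Y_i,Y_j)$ and split the pairs according to whether their index blocks overlap. For $|i-j|<l$ the blocks overlap; there are only $O(Nl)$ such pairs, and the trivial bound $|\Cov(Y_i,Y_j)|\le\frac14$ makes their total contribution $O(l/N)=o(1)$ by Assumption \ref{blocklength}. The decisive term is the sum over non-overlapping pairs $|i-j|\ge l$, for which I would use a covariance bound of the form $|\Cov(Y_i,Y_j)|\le C\,(l/|i-j|)^{D}$ up to slowly varying corrections. Summing this over $\tau=|i-j|\ge l$ yields a bound of order $(l/N)^{D}$ when $D<1$ and $(l/N)\log(N/l)$ when $D=1$, both $o(1)$; this is the step in which Assumptions \ref{dgp} and \ref{blocklength} guarantee that the dependence accumulated across the $N$ blocks remains negligible.

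The heart of the matter, and the step I expect to be the main obstacle, is the covariance estimate for the indicator functionals of two disjoint Gaussian blocks $\xi_A=(\xi_i,\ldots,\xi_{i+l-1})$ and $\xi_B=(\xi_j,\ldots,\xi_{j+l-1})$. The key point is that, because $Y_i$ is a bounded functional of $\xi_A$ and $Y_j$ of $\xi_B$, the maximal-correlation inequality for jointly Gaussian vectors gives
\begin{equation*}
|\Cov(Y_i,Y_j)|\le\rho(\xi_A,\xi_B)\,\sqrt{\Var(Y_i)\Var(Y_j)}\le\tfrac14\,\rho(\xi_A,\xi_B),
\end{equation*}
where $\rho(\xi_A,\xi_B)$ is the largest canonical correlation between the two blocks, i.e. the operator norm of $\Sigma_A^{-1/2}\Sigma_{AB}\Sigma_B^{-1/2}$. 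This bound is uniform over all measurable $G$ and all statistics $T_l$, which is precisely why no continuity or moment conditions on $G$ or $T_n$ are needed. The remaining analytic work is to show $\rho(\xi_A,\xi_B)\le C(l/|i-j|)^{D}$: Assumption \ref{dgp}(ii) (the spectral density bounded away from $0$) keeps $\|\Sigma_A^{-1/2}\|$ and $\|\Sigma_B^{-1/2}\|$ bounded uniformly in $l$, while Assumption \ref{dgp}(i) (the regularity of $L_\gamma$) lets me treat the cross-covariance block $\Sigma_{AB}=(\gamma(|i-j|+b-a))_{a,b}$, for $|i-j|\gg l$, as a small perturbation of the rank-one matrix $\gamma(|i-j|)\,\mathbf 1\mathbf 1^{\top}$, whose normalized norm is governed by $\gamma(|i-j|)\,\mathbf 1^{\top}\Sigma_A^{-1}\mathbf 1\asymp(l/|i-j|)^{D}$. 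Making this perturbation argument uniform in the separation and in $l$ is the delicate part; the intermediate separations $l\le|i-j|\le Kl$, where the rank-one picture is not yet valid, contribute only $O(l/N)$ and are again handled by the trivial bound.

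Finally, to pass from pointwise to uniform convergence when $F_T$ is continuous, I would use a Pólya-type argument. Both $t\mapsto F_{T_n}(t)$ and $t\mapsto\hat F_{l,n}(t)$ are nondecreasing, $F_{T_n}\to F_T$ and $F_{T_l}\to F_T$ uniformly by continuity of $F_T$, and the pointwise statement already proved holds on a dense set of $t$. Fixing a finite grid on which $F_T$ increases by at most $\delta$, monotonicity bounds $\sup_t|F_{T_n}(t)-\hat F_{l,n}(t)|$ by the maximum of the differences over the grid plus $\delta$; letting $n\to\infty$ and then $\delta\to0$ gives the uniform conclusion.
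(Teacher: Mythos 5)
Your architecture is the same as the paper's: unbiasedness of $\hat F_{l,n}(t)$ plus a variance bound, with the covariance of indicator functionals of two Gaussian blocks controlled by the maximal correlation, which for Gaussian vectors reduces (Kolmogorov--Rozanov) to the largest canonical correlation of linear combinations; the main term is the rank-one piece $\gamma(k)\mathbf{1}\mathbf{1}^{\top}$, controlled by the Adenstedt-type bound $\mathbf{1}^{\top}\Sigma_A^{-1}\mathbf{1}\lesssim l^{D}$ (the paper's Lemma \ref{lemB}, which you assert but correctly attribute to Assumption \ref{dgp}), the perturbation is controlled by the regularity of $L_\gamma$ together with $\lambda_{\min}(\Sigma_A)$ bounded below via the spectral density (Lemma \ref{lemA}), and uniformity follows by the P\'olya/Glivenko--Cantelli step. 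However, there is a genuine quantitative gap in your key estimate: the perturbation argument you sketch does \emph{not} deliver $\rho(\xi_A,\xi_B)\le C(l/k)^{D}$ for all separations $k=|i-j|\ge Kl$. What it delivers is the paper's two-term bound (Lemma \ref{LemC}), $\rho(k,l)\le C_1(l/k)^{D}L_\gamma(k)+C_2\,l^{2}k^{-D-1}\max\{L_\gamma(k),1\}$: the off-rank-one entries $\gamma(k+l+j-i)-\gamma(k)$ are of size $l k^{-D-1}$, and with $\sum_i|a_i|\,\sum_j|b_j|\lesssim l$ this produces the extra term $l^{2}k^{-D-1}$, which is \emph{not} dominated by $(l/k)^{D}$ until $k\gtrsim l^{2-D}$; at $k\asymp l$ it equals $l^{1-D}\ge 1$ and the bound is vacuous. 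So your claimed clean decay is unproven on the long intermediate range $Kl\le k\lesssim l^{2-D}$, which is far wider than the $O(l/N)$ strip you set aside.

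This gap is fatal to your summation plan as stated: summing the perturbation term from $k\asymp l$ gives $\frac1N\sum_{k\ge l}l^{2}k^{-D-1}\asymp l^{2-D}/N\asymp n^{D(1-D)/2-(2-D)\epsilon}$, which diverges whenever $\epsilon<D(1-D)/\bigl(2(2-D)\bigr)$, and Assumption \ref{blocklength} only guarantees \emph{some} $\epsilon>0$. The repair is exactly what the paper does: take the trivially bounded region much larger than $Kl$, namely all separations up to $\lfloor n^{1-\epsilon/2}\rfloor$ (still $o(N)$ pairs, contributing $O(n^{-\epsilon/2})$), and apply the two-term mixing bound only beyond that threshold, where $l\le C n^{(1+D)/2-\epsilon}$ makes both terms summable to $o(1)$ once Potter's theorem is invoked to absorb $L_\gamma$ (your "slowly varying corrections" do need this explicit step); a split at $l^{2/(1+D)}$ would also work. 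Note that if your uniform bound $\rho\lesssim (l/k)^{D}$ down to $k\asymp l$ were actually true and provable this simply, subsampling would be consistent under essentially $l=o(n)$ -- that is the content of the much more delicate results of Bai and Taqqu cited after Theorem \ref{main result}, and it is not obtainable from the rank-one perturbation. The remaining components of your proposal (reduction via unbiasedness, trivial bound for overlapping blocks, P\'olya argument for the uniform statement) are correct and coincide with the paper's proof.
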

As a result,  we have a consistent estimator for the distribution function of $T_n$. It is possible to build tests and confidence intervals based on this estimator.

If $D>1$, the process $(\xi_n)_{n\in\N}$ is strongly mixing due to Theorem 9.8 in the book of Bradley \citep{bradley2007introduction}. The statements of Theorem \ref{main result} hold by Corollary 3.2 in \citep{Politisromano1994l} for any block length $l$ satisfying $l\rightarrow\infty$ and $l=o(n)$.

In a recent article, Bai et al. \citep{bai2016unified} have shown that subsampling is consistent for long range dependent Gaussian processes without any extra assumptions on the slowly varying function $L_f$, but with a stronger restriction on the block size $l$, namely $l=o(n^{2-2H}L_\gamma(n))$. In another article by Bai and Taqqu \citep{bai2015canonical}, the validity of subsampling is shown under the mildest possible assumption on the block length ($l=o(n)$). The condition on the spectral density is slightly stronger than our condition, the case $\lim_{x\rightarrow 0}L_f(x)=\infty$ is not allowed.

\subsection{Examples for our Assumptions}

We will now give two examples of Gaussian processes satisfying Assumption \ref{dgp}:

\begin{Exa}[Fractional Gaussian Noise]\label{exa_fGn2} Fractional Gaussian Noise $(\xi_n)_{n\in\N}$ with Hurst parameter $H$ as introduced in Example \ref{exa_fGn} has the covariance function
\begin{equation*}
\gamma(k)=\frac{1}{2}\left(|k-1|^{2H}-2|k|^{2H}+|k+1|^{2H}\right)= H(2H-1)\left(k^{-D}+h(k)k^{-D-1}\right)
\end{equation*}
for $D=2-2H$ and a function $h$ bounded by a constant $M<\infty$. This can be easily seen by means of a Taylor expansion. Hence,  $L_{\gamma}(k)=H(2H-1)(1+h(k)/k)$ and for all $\tilde{k}\geq k$
\begin{equation*}
\left|L_\gamma(k)-L_\gamma(\tilde{k})\right|\leq H(2H-1)\bigg|\frac{h(k)}{k}-\frac{h(\tilde{k})}{\tilde{k}}\bigg|\leq H(2H-1)\frac{M}{k}=:K\frac{1}{k}.
\end{equation*}
This implies part 1 of Assumption \ref{dgp}.  For the second part note that the spectral density $f$ corresponding to  fractional Gaussian noise is given by
\begin{align*}
f(\lambda)
&=C(H)(1-\cos(\lambda))\sum_{k=-\infty}^\infty\left|\lambda+2k\pi\right|^{D-3}\\
&=\lambda^{D-1}C(H)\frac{1-\cos(\lambda)}{\lambda^2}\frac{\sum_{k=-\infty}^\infty\left|\lambda+2k\pi\right|^{D-3}}{\lambda^{D-3}},
\end{align*}
see Sinai \cite{sinai1976self}. The slowly varying function
\begin{equation*}
L_f(\lambda)=C(H)\frac{1-\cos(\lambda)}{\lambda^2}\frac{\sum_{k=-\infty}^\infty\left|\lambda+2k\pi\right|^{D-3}}{\lambda^{D-3}}
\end{equation*}
is bounded away from 0 because this  holds for the first factor $(1-\cos(\lambda))/\lambda^2$ and since
\begin{equation*}
\frac{\sum_{k=-\infty}^\infty\left|\lambda+2k\pi\right|^{D-3}}{\lambda^{D-3}}\geq \frac{\left|\lambda+0\pi\right|^{D-3}}{\lambda^{D-3}}=1.
\end{equation*}
\end{Exa}

\begin{Exa}[Gaussian FARIMA processes]\label{exa_arfima} Let $(\varepsilon_n)_{n\in\Z}$ be Gaussian white noise with variance $\sigma^2=\Var(\varepsilon_0)$. Then, for  $d\in(0,1/2)$, a FARIMA($0$, $d$, $0$) process ($\xi_n)_{n\in\N}$ is given by
\begin{equation*}
\xi_n=\sum_{j=0}^\infty \frac{\Gamma(j+d)}{\Gamma(j+1)\Gamma(d)}\varepsilon_{n-j}.
\end{equation*}
According to Pipiras and Taqqu \cite{TaqquPipiras2011}, Section 1.3, it has the specral density
\begin{equation*}
f(\lambda)=\frac{\sigma^2}{2\pi}|1-e^{-i\lambda}|^{-2d}=|\lambda|^{D-1}\frac{\sigma^2}{2\pi}\left(\frac{|\lambda|}{|1-e^{-i\lambda}|}\right)^{1-D}
\end{equation*}
with $D=1-2d\in(0,1)$. As $|1-e^{-i\lambda}|\leq \lambda$, part 2 of Assumption \ref{dgp} holds. For part 1 we have by Corollary 1.3.4 of \cite{TaqquPipiras2011} that
\begin{equation*}
\gamma(k)=\sigma^2\frac{\Gamma(1-2d)}{\Gamma(1-d)\Gamma(d)}\frac{\Gamma(k+d)}{\Gamma(k-d+1)}.
\end{equation*}
Recall that by the Stirling formula $\Gamma(x)=\big(\frac{2\pi}{x}\big)^{1/2}\big(\frac{x}{e}\big)^x\big(1+\mathcal{O}(x^{-1})\big)$. Consequently,
\begin{equation*}
\gamma(k)=\sigma^2\frac{\Gamma(1-2d)}{\Gamma(1-d)\Gamma(d)}e^{-2d+1}k^{2d-1}\Big(\frac{k+d}{k}\Big)^{k+d}\Big(\frac{k}{k-d+1}\Big)^{k-d+1}\Big(1+\mathcal{O}\big(\frac{1}{k}\big)\Big).
\end{equation*}
Using a Taylor expansion of $(k+d)\big(\log(k+d)-\log(k)\big)+(k-d+1)\big(\log(k)-\log(k-d+1)\big)$, it easily follows that
\begin{equation*}
\gamma(k)=k^{-D}L_\gamma(k)
\end{equation*}
with $L_\gamma(k)=C+\mathcal{O}(1/k)$ for some constant $C$. Part 1 of Assumption \ref{dgp} follows in the same way as in Example \ref{exa_fGn2}.
\end{Exa}

It would be interesting to know, if the sampling window method is also consistent  for long range dependent linear processes and general statistics without the assumption of Gaussianity. However,  this seems to be a very difficult problem and is beyond the scope of this article.

\section{Applications}\label{sec:appl}

\subsection{Robust, Self-Normalized Change-Point Test}

In this paper, the main motivation for considering subsampling procedures in order to approximate the distribution of test statistics consists in avoiding the choice of unknown parameters. As an example we will consider  a self-normalized test statistic that can be applied  to detect changes in the mean of long range dependent and heavy-tailed time series.

Given observations $X_1, \ldots, X_n$ with $X_i=\mu_i +G(\xi_i)$ we are concerned with a decision on the change-point problem
\begin{align*}
&\operatorname{\mathbf{H}}: \mu_1=\ldots =\mu_n\\
\intertext{against}
&\operatorname{\mathbf{A}}:  \mu_1=\ldots =\mu_k\neq \mu_{k+1}=\ldots =\mu_n\ \ \text{for some } k\in \left\{1, \ldots, n-1\right\}.
\end{align*}
Under the hypothesis $\operatorname{\mathbf{H}}$ we assume that the data generating process $\left(X_n\right)_{n \in \mathbb{N}}$ is stationary, while under the alternative $\operatorname{\mathbf{A}}$ there is a change in location at an unknown point in time. This problem has been widely studied: Cs\"{o}rg\H{o} and Horv\'{a}th \citep{CsorgoHorvath1997} give an overview of parametric and  non-parametric methods that can be applied in order to detect change-points in independent data.

Many commonly used testing procedures are based on Cusum (cumulative sum) test statistics, but when applied to data sets generated by long range dependent processes, these change-point tests often falsely reject the hypothesis of no change in the mean (see also Baek and Pipiras \citep{baek2014distinguishing}). Furthermore, the performance of Cusum-like change-point tests is sensitive to outliers in the data. 

In contrast, testing procedures that are based on rank statistics have the advantage of not being sensitive to outliers in the data. Rank-based  tests were introduced by  Antoch, Hu\v{s}kov\'{a}, Janic and Ledwina \citep{Antochetal2008} for detecting changes in the distribution function of independent random variables. Wilcoxon-type rank tests have been studied by Wang \citep{Wang2008} in the presence of linear long memory time series and by Dehling, Rooch and Taqqu \cite{DehlingRoochTaqqu2013a} for subordinated Gaussian sequences. 

Note that the normalization of the Wilcoxon change-point test statistic as proposed in \cite{DehlingRoochTaqqu2013a} depends on the slowly varying  function $L_{\gamma}$, the LRD parameter $D$ and the Hermite rank $r$ of the class of functions $1_{\{X_i\leq x\}}-F(x)$, $x\in \mathbb{R}$. Although many authors assume $r=1$ and while there are well-tried methods to estimate $D$, estimating $L_{\gamma}$ does not seem to be an easy task. For this reason, the Wilcoxon change-point test does not seem to be suitable for applications to real data.

To  avoid these issues, Betken \citep{Betken2014} proposes an alternative normalization for the Wilcoxon change-point test. This normalization approach has originally been established by Lobato \citep{Lobato2001} for decision on the hypothesis  that a short range dependent stochastic process is uncorrelated up to a lag of a certain order.
In change-point analysis, the normalization has recently been applied to several test statistics: Shao and Zhang \citep{ShaoZhang2010} define a self-normalized Kolmogorov-Smirnov test statistic that serves to identify changes in the mean of short range dependent time series. Shao \citep{Shao2011} adopted the normalization so as to define an alternative normalization for a Cusum test which detects changes in the mean of short range dependent as well as  long range dependent time series. 

For the definition of the self-normalized Wilcoxon test statistic, we introduce the ranks $R_i:=\rank(X_i)=\sum_{j=1}^n1_{\{X_j\leq X_i\}}$ for $i=1,\ldots,n$. It seems natural to transfer the normalization that has been used in \citep{Shao2011} to the Cusum test statistic of the ranks  in order to establish a self-normalized version of the Wilcoxon test statistic, which is robust to outliers in the data. Therefore, the corresponding two-sample test statistic is defined by
\begin{equation*}
G_n(k)
:=\frac{\sum_{i=1}^kR_i-\frac{k}{n}\sum_{i=1}^nR_i}{\bigg\{\frac{1}{n}\sum_{t=1}^k S_t^2(1,k)+\frac{1}{n}\sum_{t=k+1}^n S_t^2(k+1,n)\bigg\}^{1/2}}, 
\end{equation*}
where 
\begin{equation*}
S_{t}(j, k):=\sum\limits_{h=j}^t\left(R_h-\bar{R}_{j, k}\right)\ \ \text{with }\bar{R}_{j, k}:=\frac{1}{k-j+1}\sum\limits_{t=j}^kR_t.
\end{equation*}
The self-normalized Wilcoxon change-point test rejects the hypothesis for large values of $\max_{k\in \left\{\lfloor n\tau_1\rfloor, \ldots,  \lfloor n\tau_2\rfloor\right\}}\left|G_n(k)\right|$, where $0< \tau_1 <\tau_2 <1$. The proportion of the data that is included in the calculation of the supremum  is restricted by $\tau_1$ and $\tau_2$. A common  choice for these parameters is $\tau_1= 1-\tau_2=0.15$; see Andrews \citep{Andrews1993}.

For long range dependent subordinated Gaussian processes  $\left(X_n\right)_{n\in \mathbb{N}}$, the asymptotic distribution of the test statistic under the hypothesis $\operatorname{\mathbf{H}}$ can be derived by the continuous mapping theorem (see Theorem 1 in \citep{Betken2014}):
\begin{multline*}
T_n(\tau_1, \tau_2):=\max_{k\in \left\{\lfloor n\tau_1\rfloor, \ldots,  \lfloor n\tau_2\rfloor\right\}}\left|G_n(k)\right|\\
 \Rightarrow\sup_{\tau_1\leq\lambda\leq \tau_2}\frac{\left|Z_{r}(\lambda)-\lambda Z_{r}(1)\right|}{\big\{\int_0^\lambda (Z_{r}(t)-\frac{t}{\lambda}Z_{r}(\lambda))^2dt+\int_0^{1-\lambda} (Z_{r}^\star(t)-\frac{t}{1-\lambda}Z_{r}^\star(1-\lambda))^2dt\big\}^{1/2}}.
\end{multline*}
Here, $Z_r$ is an $r$-th order Hermite process with Hurst parameter $H:=\max\{1-\frac{rD}{2},\frac{1}{2}\}$ and $Z_{t}^\star(r)=Z_{r}(1)-Z_{r}(1-t)$. A comparison of  $T_n(\tau_1, \tau_2)$ with the critical values of its limit distribution still presupposes determination of these parameters.   We can bypass the estimation of $D$ and $r$ by applying the subsampling procedure since,  due to the convergence of $T_n(\tau_1, \tau_2)$,  Assumption \ref{convergence of T_n} holds.

Note that even under the alternative $\operatorname{\mathbf{A}}$ (change in location), we have to find the quantiles of the distribution under the hypothesis (stationarity). As the block length $l$ is much shorter than  the sample size $n$,  most blocks will not be contaminated by the change-point so that the distribution of the test statistic will not  change that much.\ignore{ Even for  blocks which cover the change, the power of the test is low (because there are only $l$ observations) and thus the distribution will not change too much.} The accuracy and the power of the test will be investigated by a simulation study in Section \ref{sec:simu}.

If the distribution of $X_i$ is not continuous, there might be ties in the data and consideration of the ranks $R_i=\sum_{j=1}^n1_{\{X_j\leq X_i\}}$ may not be appropriate. We propose to use a modified statistic  based on the modified ranks $\tilde{R}_i=\sum_{j=1}^n(1_{\{X_j< X_i\}}+\frac{1}{2}1_{\{X_j= X_i\}})$ in this case. For the convergence of the corresponding self-normalized change point test see Appendix \ref{appA}.

The test statistic $T_n(\tau_1, \tau_2)$ is designed for the detection of a single change-point.
An extension of the testing procedure that allows for multiple change-points is possible by adapting  Shao's testing procedure which takes this problem into consideration (see  \citep{Shao2011}). 
For convenience, we describe the construction of the modified test statistic in the case of two change-points. 
The general idea consists in dividing the sample given by $X_1, \ldots, X_n$ according to the pair $(k_1, k_2)$ of potential change-point locations and to compute the original test statistic with respect to the subsamples $X_1, \ldots, X_{k_2}$ and $X_{k_1+1}, \ldots, X_{n}$. We reject the hypothesis for large values of the sum of the corresponding single statistics.

For $\varepsilon\in (0, \tau_2-\tau_1)$ define
$T_n(\tau_1, \tau_2, \varepsilon):=\sup_{(k_1, k_2)\in \Omega_n(\tau_1, \tau_2, \varepsilon)}
\left|G_n(k_1, k_2)\right|$,
where $ \Omega_n(\tau_1, \tau_2, \varepsilon):=\left\{(k_1, k_2): \lfloor n\tau_1\rfloor\leq k_1<k_2\leq \lfloor n\tau_2\rfloor, \ k_2-k_1\geq \lfloor n\varepsilon \rfloor \right\}$
and
\begin{align*}
G_n(k_1, k_2)&:=\frac{\left|\sum_{i=1}^{k_1}R_{i}^{(1)}-\frac{k_1}{k_2}\sum_{i=1}^{k_2}R_{i}^{(1)}\right|}{\bigg\{\frac{1}{n}\sum_{t=1}^{k_1} \left(S_{t}^{(1)}(1,k_1)\right)^2+\frac{1}{n}\sum_{t=k_1+1}^{k_2} \left(S_{t}^{(1)}(k_1+1,k_2)\right)^{2}\bigg\}^{1/2}}\\
&\quad +\frac{\left|\sum_{i=k_1+1}^{k_2}R_{i}^{(2)}-\frac{k_2-k_1}{n-k_1}\sum_{i=k_1+1}^{n}R_{i}^{(2)}\right|}{\bigg\{\frac{1}{n}\sum_{t=k_1+1}^{k_2} \left(S_{t}^{(2)}(k_1+1,k_2)\right)^{2}+\frac{1}{n}\sum_{t=k_2+1}^{n} \left(S_{t}^{(2)}(k_1+1,n)\right)^{2}\bigg\}^{1/2}},
\end{align*}
where
\begin{align*}
&R_{i}^{(1)}:=\sum\limits_{j=1}^{k_2}1_{\left\{X_j\leq X_i\right\}}, \quad R_{i}^{(2)}.=\sum\limits_{j=k_1+1}^{n}1_{\left\{X_j\leq X_i\right\}},\\
& S_{t}^{(h)}(j, k):=\sum\limits_{i=j}^t\left(R_{i}^{(h)}-\bar{R}^{(h)}_{j, k}\right)\ \ \text{with } \ \bar{R}^{(h)}_{j, k}:=\frac{1}{k-j+1}\sum\limits_{t=j}^kR_t^{(h)}.
\end{align*}
The distribution of the test statistic converges to a limit $T(r, \tau_1, \tau_2, \varepsilon)$ (see Appendix \ref{appB}), so subsampling can be applied. The critical values corresponding to the asymptotic distribution of the  test statistic are reported in  Table \ref{cv_SN-Wilcoxon_2_cp}.  
\begin{table}[htbp]
\caption{Simulated critical values for the distribution of $T(1,  \tau_1, \tau_2, \varepsilon)$ when  $\left[\tau_1, \tau_2\right]=\left[0.15, 0.85\right]$ and  $\varepsilon =0.15$. The sample size is  $1 000$, the number of
replications is $10, 000$.} 		
\label{cv_SN-Wilcoxon_2_cp}	
\begin{tabular}{l r r r r }
& & 10\% &	 5\% 	 & 1\% 	\\	
\hline			
$H=0.501$ & & 17.79 &  19.76 &  24.13 \\ 
$H=0.6$ & & 19.80 & 22.38 & 27.68 \\ 
$H=0.7$ & & 22.08 & 24.95 & 30.46 \\ 
$H=0.8$ & & 24.24 & 27.61 & 34.04 \\ 
$H=0.9$ & & 26.50 & 30.11 & 37.78 \\ 
$H=0.999$ & & 28.28 & 32.32 & 41.24 \\ 
 \end{tabular}
\end{table}

\subsection{Data Examples}

We will revisit some data sets which have been analyzed before in the literature. We will use the self-normalized Wilcoxon change-point test combined with subsampling and compare our findings to the conclusions of other authors.

\begin{figure}[ht]
\includegraphics[scale=0.66]{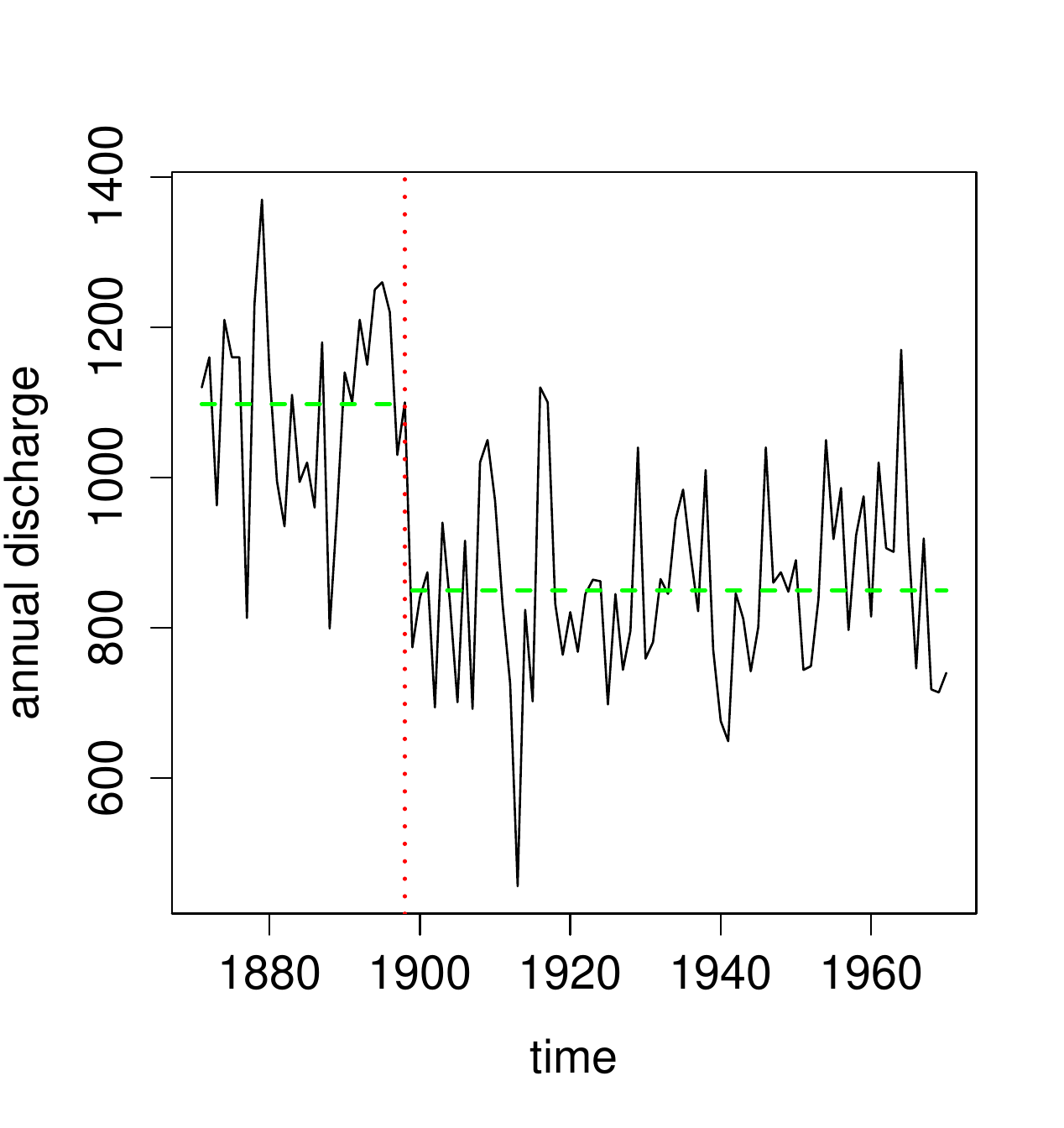}
\caption{Measurements of the annual discharge of the river Nile at  Aswan in $10^8$ $m^3$ for the years 1871-1970.  The dotted line indicates the location of the
change-point; the dashed lines designate the sample means for the pre-break and post-break samples.}
\label{Nile}
\end{figure}

The plot in Figure \ref{Nile} depicts 
 the annual volume of
discharge from the Nile river at Aswan in $10^8$ $m^3$ for the years 1871 to 1970.
The data set has been analyzed for the detection of a change-point by 
numerous authors under differing assumptions concerning the data generating random process and by usage of diverse methods.
Amongst others, Cobb \citep{Cobb1978}, MacNeill, Tang and Jandhyala \citep{MacNeill1991}, Wu and Zhao \citep{WuZhao2007} and Shao \citep{Shao2011} provided statistically significant evidence for a decrease of the Nile's annual discharge 	towards the end of the 19th century. The  construction of the Aswan Low Dam between 1898 and 1902  serves as a popular explanation for an abrupt change in the data.

The value of the self-normalized Wilcoxon test statistic computed with respect to the data is given by $T_n(\tau_1, \tau_2)=13.48729$. For a level of significance of $5\%$, the self-normalized Wilcoxon change-point test  rejects the hypothesis for every possible value of $H\in \left(\frac{1}{2}, 1\right)$. 
Furthermore, we approximate the distribution of the self-normalized Wilcoxon test statistic by the sampling window method with block size $l=\lfloor \sqrt{n}\rfloor=10$. The subsampling-based test decision also  indicates the existence of a change-point in the mean of the data, even if we consider the $99\%$-quantile of $\hat{F}_{l, n}$.

In particular, 
previous analysis of the Nile data done by Wu and Zhao \citep{WuZhao2007} and Balke \citep{Balke1993} suggests that the change in the discharge volume occurred in 1899. We applied the self-normalized Wilcoxon test and the sampling window method to the corresponding pre-break and post-break samples. 
Neither of these two approaches leads to rejection of the hypothesis, so that it seems reasonable to consider  both samples as stationary. At this point, it is interesting to note that, based on the whole sample,  local Whittle estimation with bandwidth parameter $m = \lfloor n^{2/3}\rfloor$  suggests the existence of long range dependence characterized by an Hurst parameter $\hat{H} = 0.962$, whereas the estimates for the  pre-break and post-break samples given by $\hat{H}_1= 0.517$ and $\hat{H}_2=0.5$, respectively, should be considered as indication of short range dependent data. In this regard,  our findings support the conjecture of spurious long memory caused by a change-point and therefore coincide with the results of Shao \citep{Shao2011}.
\ignore{
Furthermore, the value of the test statistic,  which is constructed for the detection of two changes,  is given by $T_n(\tau_1, \tau_2, \varepsilon)=27.09477$ when $\varepsilon =\tau_1=1-\tau_2=0.15$.
Under the assumption of short range dependence,  a comparison with the critical values reported in Table \ref{cv_SN-Wilcoxon_2_cp} implies a test decision in favor of the alternative.
Applying the sampling window method with respect to this test statistic  yields a rejection of the hypothesis for any  choice of  block length $l\in\{\lfloor n^{\gamma}\rfloor|\ \gamma=0.6, 0.7, 0.8, 0.9\}=\{15, 25, 39, 63 \}$ based on comparison of  $T_n(\tau_1, \tau_2, \varepsilon)$ with the $99\%$-quantile of the corresponding sampling distribution $\hat{F}_{l, n}$. However, these results do not necessarily provide evidence for the assertion of two changes in the mean, since  a single distinct change-point in the data may easily lead to a rejection of the hypothesis of no change.}

\begin{figure}[ht]
\includegraphics[scale=0.66]{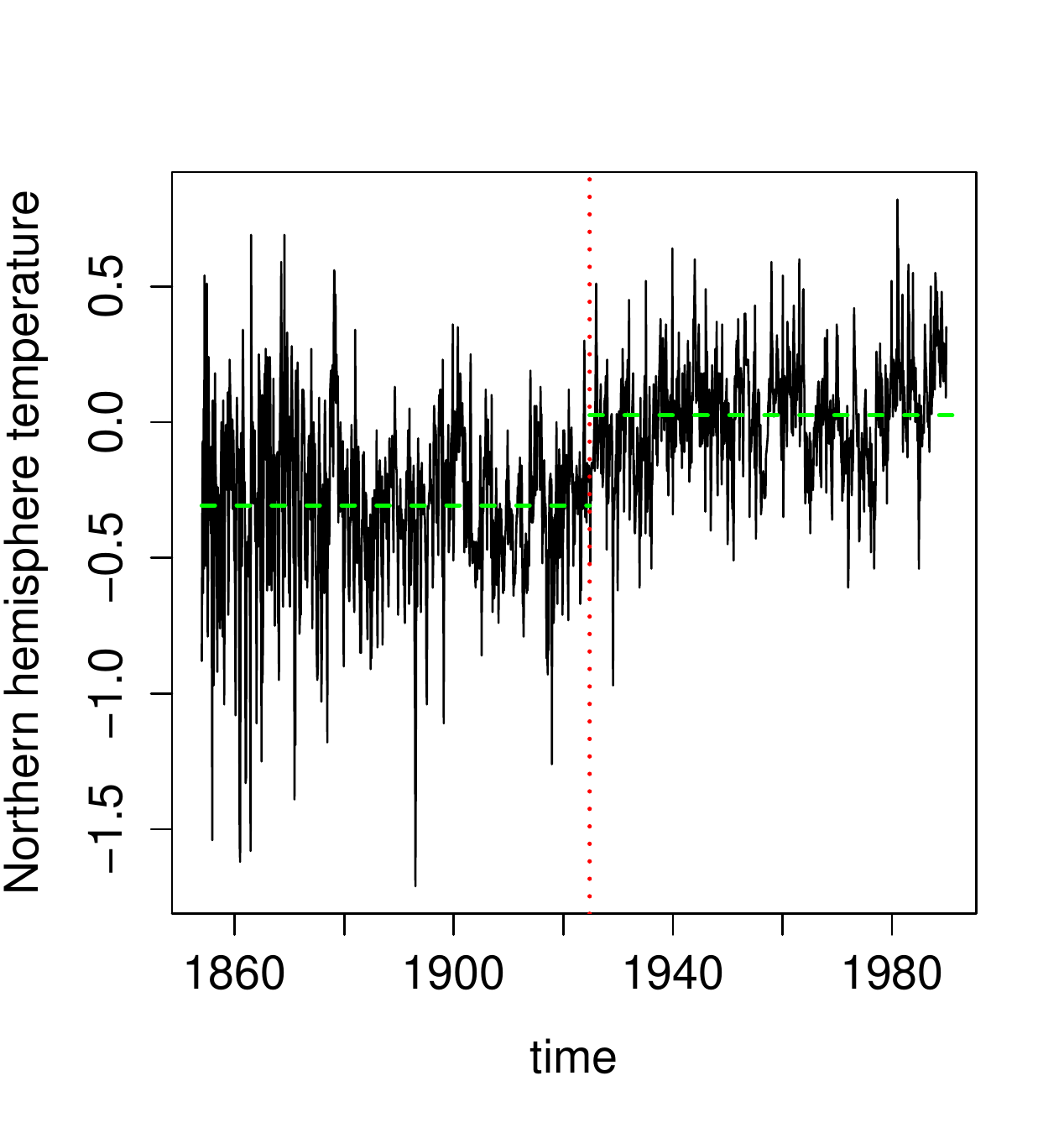}
\caption{Monthly temperature of the northern hemisphere for the years 1854-1989 from the data base held at the Climate Research Unit of the University of East Anglia, Norwich, England. The temperature anomalies (in degrees C) are calculated with respect to the reference period 1950-1979.  The dotted line indicates the location of the potential change-point; the dashed lines designate the sample means for the pre-break and post-break samples.}
\label{NemiTemp}
\end{figure}

The second data set consists of the seasonally adjusted monthly deviations of the  temperature (degrees C) for the northern hemisphere during the years 1854 to 1989 from the monthly averages over the period 1950 to 1979. The data results from spatial averaging of temperatures measured over land and sea. At first sight, the plot in Figure \ref{NemiTemp} may suggest an increasing trend as well as an abrupt change of the temperature deviations. Statistical evidence for a positive deterministic trend  implies affirmation of the conjecture that there has  been global warming during the last decades.

In scientific discourse, the question of whether the Northern hemisphere temperature  data acts  as an  indicator for global warming of the atmosphere is a controversial issue. Deo and Hurvich \citep{DeoHurvich1998} provided  some indication for global warming by fitting a linear trend to the data.  Beran and Feng \citep{BeranFeng2002} considered a more general stochastic model by the assumption of so-called semiparametric fractional autoregressive  (SEMIFAR) processes. Their method did not deliver sufficient statistical evidence for a deterministic trend. Wang \citep{Wang2007} applied another  method for the detection of gradual change to the global temperature data and did not detect an increasing trend , either. Nonetheless,   he offers an alternative explanation  for the occurrence of a trend-like behavior by  pointing out that it may have been generated by  stationary long range dependent processes. In contrast, it is shown in Shao \citep{Shao2011} that the existence of a change-point in the mean yields yet another explanation for the performance of the data.

The value of the self-normalized Wilcoxon test statistic computed with respect to the data is given by $T_n(\tau_1, \tau_2)=18.98636$. Consequently, the self-normalized Wilcoxon change-point test would  reject the hypothesis for every possible value of $H\in \left(\frac{1}{2}, 1\right)$ at a level of significance of $1\%$. In addition, an application of the sampling window method with respect to the self-normalized Wilcoxon test statistic based on comparison of  $T_n(\tau_1, \tau_2)$ with the $99\%$-quantile of the sampling distribution $\hat{F}_{l, n}$ yields a test decision in favor of the alternative hypothesis for any  choice of the block length $l\in\{\lfloor n^{\gamma}\rfloor|\ \gamma=0.3, 0.4, \ldots, 0.9\}=\{9, 19, 40, 84, 177, 371, 778\}$. All in all, both testing procedures  provide strong evidence for the existence of a change in the mean.

According to Shao \citep{Shao2011} the change-point  is located around October 1924. Based on the whole sample  local Whittle estimation with bandwidth $m=\lfloor n^{2/3}\rfloor$
provides an estimator $\hat{H}=0.811$. The estimated Hurst parameters for the pre-break  and post-break sample are $\hat{H}_1=0.597$ and $\hat{H}_2=0.88$, respectively. Neither of both testing procedures, i.e. subsampling with respect to the self-normalized Wilcoxon test statistic and comparison of the value of $T_n(\tau_1, \tau_2)$ with the corresponding critical values of its limit distribution, provides evidence for another change-point in the pre-break or post-break sample.

Moreover, computation of the test statistic  that allows for two  change-point locations yields $T_n(\tau_1, \tau_2, \varepsilon)=17.88404$ (for $\tau_1=1-\tau_2=\varepsilon=0.15$), i.e. if compared to the  values in Table \ref{cv_SN-Wilcoxon_2_cp}, the test statistic  only surpasses the critical value corresponding to  $H=0.501$ and a significance level of $10\%$, but  does not exceed any of the other values. Subsampling with respect to the test statistic $T_n(\tau_1, \tau_2, \varepsilon)$ does not support the conjecture of two changes, either. In fact, subsampling leads to a rejection of the hypothesis when the block length equals $l =\lfloor n^{0.7}\rfloor = 177$ (based on a comparison of  $T_n(\tau_1, \tau_2, \varepsilon)$ with the $95\%$-quantile of the corresponding sampling distribution $\hat{F}_{l, n}$), but yields a test decision in favor of the hypothesis for block lengths $l\in\{\lfloor n^{\gamma}\rfloor|\ \gamma=0.5, 0.6, 0.8, 0.9\}=\{40, 84, 371, 778\}$  and for comparison with the $90\%$-quantile of  $\hat{F}_{l, n}$.

Therefore, it seems safe to conclude that the appearance of long memory in the post-break sample is not caused by  another change-point in the mean. The pronounced difference between the local Whittle estimators $\hat{H}_1$ and $\hat{H}_2$ suggests  a change in the dependence structure of the times series. Another explanation might be a gradual change of the temperature in the post-break period. We conjecture that our test has only low power in the case of a gradual change, because the denominator of our self-normalized test statistic is inflated as the ranks systematically deviate from the mean rank of the first and second part. When using subsampling, the trend also appears  in subsamples so that we fail to approximate the distribution under the hypothesis.

As  pointed out by one of the referees, the Northern hemisphere temperature data  does  not seem to be second-order stationary; the variance in the first part of the time series seems to be higher. Such a change in variance should also result in a loss of power. The reason is that the ranks in the part with the higher variance are more extreme, so that the distance to the mean rank of this part is larger. This leads to a higher value of the denominator of our self-normalized test statistic and consequently to a lower value of the ratio.

The third data set consists of the arrival rate of Ethernet data (bytes per 10 milliseconds) from a local area network (LAN) measured at Bellcore Research and Engineering Center in 1989. For more information on the LAN traffic monitoring we refer to Leland and Wilson \citep{LelandWilson1991} and Beran \citep{Beran1994}. Figure \ref{ethernet} reveals that the observations are strongly right-skewed. As the self-normalized Wilcoxon test is based on ranks, we do not expect that this will affect our analysis.

\begin{figure}[ht]
\includegraphics[scale=0.6]{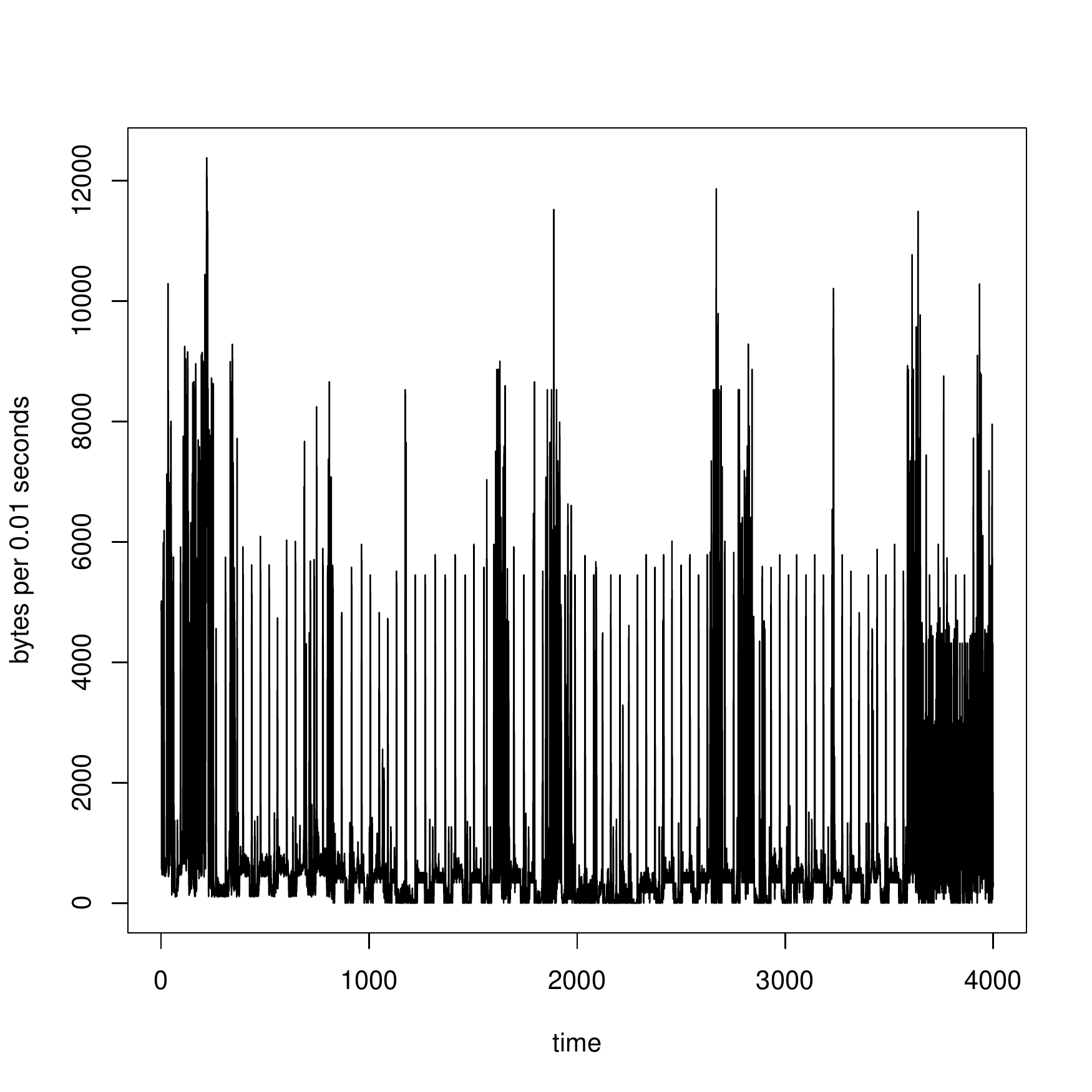}
\caption{Ethernet traffic in bytes per 10 milliseconds from a LAN measured at Bellcore Research Engineering Center.}
\label{ethernet}
\end{figure}

Coulon, Chabert and Swami \citep{Coulon2009} examined this data set for change-points before. The method proposed in their paper is based on the assumption that a FARIMA model holds for segments of the data. The number of different sections and the location of the change-points are chosen by a model selection criterion. The algorithm proposed by Coulon et al. \citep{Coulon2009} detects multiple changes in the parameters of the corresponding FARIMA time series.

In contrast, an application of the self-normalized Wilcoxon change-point test does not provide evidence for a change-point in the mean: the value of the  test statistic is given by $T_n(\tau_1, \tau_2)=3.270726$, i.e. even for a level of significance
of 10\%, the self-normalized Wilcoxon change-point test does not reject the hypothesis for any value $H\in \left(\frac{1}{2}, 1\right)$. Furthermore,  subsampling with respect to the self-normalized Wilcoxon test statistic does not lead to a rejection of the hypothesis , either (for any  choice of block length $l\in\{\lfloor n^{\gamma}\rfloor|\ \gamma=0.3, 0.4, \ldots, 0.9\}=\{12, 27, 63, 144, 332, 761, 1745\}$  and  for comparison with the $90\%$-quantile of  the corresponding sampling distribution $\hat{F}_{l, n}$). 

Taking into consideration that the data set contains ties  (the value $0$ appears several times), we also applied the self-normalized Wilcoxon test statistic  based on the modified ranks $\tilde{R}_i$ and used subsampling with respect to this statistic.
Both approaches did not lead to a rejection of the hypothesis.

An application of the test statistic  constructed for the detection of two changes  yields a value of $T_n(\tau_1, \tau_2, \varepsilon)=15.24527$ when $\varepsilon =\tau_1=1-\tau_2=0.15$.
Clearly, this does not lead to a rejection of the hypothesis for any value of the parameter $H$. In addition, subsampling
based on comparison of  $T_n(\tau_1, \tau_2, \varepsilon)$ with the $90\%$-quantile of the corresponding sampling distribution $\hat{F}_{l, n}$ does not provide evidence for the assertion of multiple changes for any block lenght  $l\in\{\lfloor n^{\gamma}\rfloor|\ \gamma= 0.5, 0.6, 0.7, 0.8\}=\{63, 144, 332, 761\}$ in the data,  either.

These results do not coincide with the analysis of the previous authors. On the one hand this may be due to the fact that the applied methods differ considerably from the testing procedures applied before. On the other hand, the change-point estimation algorithm proposed in Coulon, Chabert and Swami \citep{Coulon2009} is not robust to skewness or heavy-tailed distributions and decisively relies  on the assumption of FARIMA time series. However, this seems to contradict observations made by   Bhansali and Kokoszka \citep{BhansaliKokoszka2001} as well as Taqqu and Teverovsky \citep{TaqquTeverovsky1997} who stress that the  model that fits the   Ethernet traffic data is very unlikely to be FARIMA. 

 Estimation of the Hurst parameter by the local Whittle procedure  with bandwidth parameter $m = \lfloor n^{2/3}\rfloor$  yields an estimate of $\hat{H} = 0.845$ and therefore indicates the existence of long range dependence. This is consistent with the results of   Leland et al. \citep{Leland1994}
 and Taqqu and Teverovsky \citep{TaqquTeverovsky1997}.
 
In the three data examples, we find that the results obtained by subsampling and by parameter estimation are in good accordance with each other. The methods take into account long range dependence or heavy tails, but still detect a change in  location in the first two examples. For the third data example our analysis supports the hypothesis of stationarity.

\section{Simulations}\label{sec:simu}

We will now investigate the finite sample performance of the subsampling procedure  with respect to  the self-normalized Wilcoxon test and with respect to the classical Wilcoxon change-point test. Moreover, we will compare these results to the performance of the tests when the test decision is based on critical values obtained from the asymptotic distribution of the test statistic.

For this purpose, we consider subordinated Gaussian 
time series $\left(X_n\right)_{n\in \mathbb{N}}$, $X_n=G(\xi_n)$, where $\left(\xi_n\right)_{n\in \mathbb{N}}$
is fractional Gaussian noise (introduced in  Examples \ref{exa_fGn} and \ref{exa_fGn2}) with Hurst parameter $H\in\{0.6, 0.7, 0.8, 0.9\}$ and covariance function
\begin{align*}
\gamma(k)\sim k^{-D}\left(1-\frac{D}{2}\right)\left(1-D\right),
\end{align*}
where $D=2-2H$. Initially, we take $G(t)=t$, so that  $\left(X_n\right)_{n\in \mathbb{N}}$ has normal marginal distributions.
We also consider
the transformation 
\begin{align*}
G(t)=\left(\frac{\beta k^2}{(\beta -1)^2(\beta-2)}\right)^{-\frac{1}{2}}\left(k(\Phi(t))^{-\frac{1}{\beta}} -\frac{\beta k}{\beta -1}\right) 
\end{align*}
(with $\Phi$   denoting the standard normal distribution function) so as to generate Pareto-distributed data with parameters $k, \beta>0$ (referred to as Pareto($\beta$, $k$)).
In both cases, the  Hermite rank $r$ of $1_{\left\{G(\xi_i)\leq x\right\}}-F(x), x\in \mathbb{R}$, equals $r=1$ and 
\begin{align*}
 \left|\int_{\mathbb{R}}J_1(x)dF(x)\right|=\frac{1}{2\sqrt{\pi}};
\end{align*}
see \citep{DehlingRoochTaqqu2013a}.

Under the  above conditions, the critical values  of the asymptotic distribution of the self-normalized Wilcoxon test statistic are reported in Table 2 in  \citep{Betken2014}. 
The limit of the Wilcoxon change-point test statistic can be found in \citep{DehlingRoochTaqqu2013a},
the corresponding critical values can be taken from Table 1 in  \citep{Betken2014}. 

The  frequencies of rejections of both testing procedures are reported in Table \ref{SN-Wilcoxon(fGn)} and Table \ref{SN-Wilcoxon(Pareto)} for the self-normalized Wilcoxon change-point test and in Table \ref{Wilcoxon(fGn)} and Table \ref{Wilcoxon(Pareto)} for the classical Wilcoxon test (without self-normalization). The calculations are based on $5,000$ realizations of  time series with sample size $n=300$ and $n=500$. We have chosen  block lengths $l=l_n=\lfloor n^{\gamma}\rfloor$ with  $\gamma\in \left\{0.4, 0.5, 0.6\right\}$. As level of significance we chose $5\%$, i.e. we compare the values of the test statistic with the corresponding critical values of its asymptotic distribution and the corresponding quantile of the empirical distribution function $\hat{F}_{l, n}$, respectively.
 
For the usual testing procedures the estimation of the Hermite rank $r$, the  slowly varying function $L_{\gamma}$ and the integral $\int J_{1}(x)dF(x)$ is neglected. Yet, for every simulated time series we estimate the Hurst parameter $H$ by the local Whittle  estimator $\hat{H}$ proposed in K\"{u}nsch \cite{Kuensch1987}. This estimator is based on an approximation of the spectral density by the periodogram at the Fourier frequencies. It depends  on the spectral bandwidth parameter $m=m(n)$ which denotes the number of Fourier  frequencies used for the estimation.
If the  bandwidth $m$ satisfies  $\frac{1}{m}+\frac{m}{n}\longrightarrow 0$ as $n\longrightarrow \infty$, the local Whittle estimator is a consistent estimator for $H$; see Robinson \cite{Robinson1995}. For convenience we always choose $m = \lfloor n^{2/3}\rfloor$ in this article.
The  critical values corresponding to the estimated values of $H$ are determined by  linear interpolation.

Under the alternative $\operatorname{\mathbf{A}}$ we analyze the power of the testing procedures (the frequency of rejection) by considering different choices for the height of the level shift (denoted by $h$) and the location $[n\tau]$ of the change-point. In the tables the columns  that are superscribed by \enquote{$h=0$} correspond to the frequency of a type 1 error, i.e. the rejection rate under the hypothesis $\operatorname{\mathbf{H}}$.

For the self-normalized Wilcoxon change-point test (based on the asymptotic distribution), the empirical size almost equals the level of significance of $5\%$ for normally distributed data (see Table \ref{SN-Wilcoxon(fGn)}).  The  sampling window method yields rejection rates that slightly exceed this level. For Pareto($3$, $1$) time series both testing procedures lead to similar results and tend to reject the hypothesis too often when there is no change. With regard to the empirical power, it is notable that for fractional Gaussian noise time series the sampling window method yields considerably better power than the test based on asymptotic critical values.  If Pareto($3$, $1$)-distributed time series are considered, the empirical power of the subsampling procedure is still better  than the empirical power that results from using asymptotic critical values. However, in this case, the deviation  of the rejection rates is rather small. While the empirical size is not much affected by the Hurst parameter $H$, the empirical power is lower for $H=0.8, 0.9$.

Considering the classical Wilcoxon test (without self-normalization), it is notable that for both procedures the empirical size is in most cases not close to the nominal level of significance ($5\%$), ranging from $1.1\%$ to $20.8\%$ using subsampling and from $2.6\%$ to $36.0\%$ using asymptotic critical values. In general, the sampling window method becomes more conservative for higher values of the Hurst parameter $H$, while the test based on the asymptotic distribution becomes more liberal. Under the alternative, the usual application of the Wilcoxon test yields better power than the sampling window method, especially for high values of $H$. It should be emphasized that this comparison is problematic because the rejection frequencies under the hypothesis differ.

We conclude that the self-normalized Wilcoxon change-point test is more reliable than the classical change-point test. The reason might be that in the scaling of the classical test, the estimator $\hat{H}$ of the Hurst parameter enters as a power of the sample size $n$. Thus, a small error in this estimation might lead to a large error in the value of the test statistic. By using the sampling window method for the self-normalized version, we avoid the estimation of unknown parameters so that the performance is similar to the performance of the classical testing procedure which compares the values of the test statistic with the corresponding critical values.

Note that  in most cases covered by our simulations the choice of the block length for the subsampling procedure does not have a big impact on  the frequency of a type 1 error. Considering the self-normalized Wilcoxon change-point test, an increase of the block length tends to go along with a decrease in power, especially for big values of the Hurst parameter $H$ and Pareto-distributed random variables. For smaller values of $H$ the effect is not pronounced. We recommend using a block length $\lfloor n^{0.4}\rfloor$ or $\lfloor n^{0.5}\rfloor$ for the self-normalized change-point test as the choice $l=\lfloor n^{0.6}\rfloor$ implies worse properties in most cases.

An application of the subsampling testing procedure to the classical (non-self-normalized) Wilcoxon test for different choices of the block length shows the opposite effect on the rejection rate under the alternative: an increase of the block length results in a higher  frequency of rejections. Here, the block length $\lfloor n^{0.6}\rfloor$ leads to better results in many cases. However, we recommend to not use this test, but to self-normalize the test statistic instead.

An alternative way of choosing the block length would be to apply the data-driven block selection rule proposed by G\"{o}tze and Ra\u{c}kauskas \citep{GoetzeRackauskas2001} and Bickel and Sakov \citep{BickelSakov2008}.
Although the algorithm had originally been implemented for applications of the $m$-out-of-$n$ bootstrap to independent and identically   distributed data, it also lead to satisfactory simulation results in applications to long range dependent time series (see \citep{Jachetal2012}). Another general approach to the selection of the block size in the context of hypothesis testing  is given by Algorithm 9.4.2  in Politis, Romano and Wolf \citep{PolitisRomanoWolf1999}.

\begin{landscape}
\begin{table}[htbp]
\caption{Rejection rates of the self-normalized Wilcoxon change-point test obtained by subsampling (left) with block length $l = \lfloor n^{\gamma}\rfloor$, $\gamma \in \left\{0.4, 0.5, 0.6\right\}$, and by comparison with asymptotic critical values (right) for fractional Gaussian noise   of length $n$ with Hurst parameter $H$.}
\label{SN-Wilcoxon(fGn)}
\begin{tabular}{c c c c c c c c c c c c c c c c c c c c c}
 & & & \multicolumn{7}{l}{sampling window method}
 & &&&
\multicolumn{5}{l}{asymptotic distribution}\\
 \cline{4-12}  \cline{14-20}\\ 
 & &  & & & & &\multicolumn{2}{c}{$\tau = 0.25$} & &\multicolumn{2}{c}{$\tau = 0.5$} & & & & \multicolumn{2}{c}{$\tau = 0.25$} & & \multicolumn{2}{c}{$\tau = 0.5$} \\
 \cline{8-9}  \cline{11-12}     \cline{16-17} \cline{19-20}\\
fGn & $n$ & & $l$  &  & $h = 0$ &  & $h = 0.5$ & $h = 1$ &  & $h = 0.5$ & $h = 1$ &  & $h=0$ &  & $h = 0.5$ & $h = 1$ &  & $h = 0.5$ & $h = 1$ \\ 
 \hline
   $H = 0.6$ & 300 & &  9 &  & 0.041 &  & 0.263 & 0.700 &  & 0.502 & 0.952 \\ 
&  & & 17 & & 0.064 &  & 0.313 & 0.742 &  & 0.570 & 0.964 &  & 0.044 &  & 0.209 & 0.521 &  & 0.424 & 0.861 \\ 
 & & &  30 &  & 0.070 &  & 0.322 & 0.705 &  & 0.555 & 0.943 \\ 
 & 500 & &  12 &  & 0.053 &  & 0.396 & 0.859 &  & 0.697 & 0.994 \\ 
 &  & & 22 &  & 0.060 &  & 0.421 & 0.861 &  & 0.720 & 0.995 &  & 0.049 &  & 0.303 & 0.687 &  & 0.577 & 0.958 \\ 
  &  & &  41 &  & 0.069 &  & 0.411 & 0.829 &  & 0.697 & 0.991 \\ 
    $H = 0.7$ & 300 & & 9 &  & 0.057 &  & 0.155 & 0.412 &  & 0.291 & 0.759 \\ 
 &  & & 17  & & 0.070 &  & 0.171 & 0.423 &  & 0.313 & 0.763 &  & 0.053 &  & 0.108 & 0.268 &  & 0.228 & 0.611 \\ 
  & & &  30 & & 0.077 &  & 0.177 & 0.403 &  & 0.314 & 0.737 \\ 
  & 500 & &  12 &  & 0.056 &  & 0.183 & 0.513 &  & 0.382 & 0.856 \\ 
 & & & 22 & & 0.059 &  & 0.193 & 0.508 &  & 0.382 & 0.854 &  & 0.048 &  & 0.133 & 0.359 &  & 0.302 & 0.730 \\ 
  &  & &  41 &  & 0.065 &  & 0.192 & 0.476 &  & 0.387 & 0.819 \\ 
    $H = 0.8$ & 300 & & 9 &  & 0.070 &  & 0.126 & 0.251 &  & 0.223 & 0.526 \\ 
 & & & 17 &  & 0.067 &  & 0.117 & 0.234 &  & 0.208 & 0.494 &  & 0.048 &  & 0.081 & 0.144 &  & 0.141 & 0.362 \\ 
  &  & &  30 &  & 0.073 &  & 0.114 & 0.218 &  & 0.201 & 0.466 \\
  & 500 & &  12 &  & 0.066 &  & 0.121 & 0.295 &  & 0.217 & 0.591 \\ 
 &  & & 22 & & 0.068 &  & 0.114 & 0.278 &  & 0.210 & 0.567 &  & 0.053 &  & 0.085 & 0.198 &  & 0.163 & 0.462 \\ 
  &  & & 41 &  & 0.069 &  & 0.119 & 0.257 &  & 0.205 & 0.532 \\ 
    $H = 0.9$ & 300 & &  9 &  & 0.093 &  & 0.126 & 0.208 &  & 0.209 & 0.462 \\ 
 & & & 17 & & 0.074 &  & 0.097 & 0.161 &  & 0.169 & 0.397 &  & 0.057 &  & 0.065 & 0.106 &  & 0.125 & 0.308 \\ 
  &  & &  30 &  & 0.073 &  & 0.095 & 0.145 &  & 0.165 & 0.367 \\ 
  & 500 & &  12 &  & 0.079 &  & 0.105 & 0.194 &  & 0.185 & 0.461 \\ 
 & & & 22 & & 0.067 &  & 0.091 & 0.166 &  & 0.162 & 0.416 &  & 0.051 &  & 0.068 & 0.120 &  & 0.128 & 0.350 \\ 
  & & &  41 &  & 0.063 &  & 0.087 & 0.146 &  & 0.152 & 0.391 \\ 
 \end{tabular}
\end{table}
\end{landscape}

\begin{landscape}
\begin{table}[htbp]
\caption{Rejection rates of the self-normalized Wilcoxon change-point test obtained by subsampling (left) with block length $l = \lfloor n^{\gamma}\rfloor$, $\gamma \in \left\{0.4, 0.5, 0.6\right\}$,   and by comparison with asymptotic critical values (right) for  Pareto($3$, $1$)-transformed fractional Gaussian noise  of length $n$ with Hurst parameter $H$.}
\label{SN-Wilcoxon(Pareto)}
\begin{tabular}{c c c c c c c c c c c c c c c c c c c c c}
 & & & \multicolumn{7}{l}{sampling window method}
 & &&&
\multicolumn{5}{l}{asymptotic distribution}\\
 \cline{4-12}  \cline{14-20}\\ 
 & &  & & & & &\multicolumn{2}{c}{$\tau = 0.25$} & &\multicolumn{2}{c}{$\tau = 0.5$} & & & & \multicolumn{2}{c}{$\tau = 0.25$} & & \multicolumn{2}{c}{$\tau = 0.5$} \\
 \cline{8-9}  \cline{11-12}     \cline{16-17} \cline{19-20}\\
Pareto(3, 1) & $n$ & & $l$  &  & $h = 0$ &  & $h = 0.5$ & $h = 1$ &  & $h = 0.5$ & $h = 1$ &  & $h=0$ &  & $h = 0.5$ & $h = 1$ &  & $h = 0.5$ & $h = 1$ \\  
 \hline
$H = 0.6$     & 300 &  & 9 &  & 0.041 &  & 0.847 & 0.977 &  & 0.990 & 1.000 \\ 
 & & & 17 & & 0.067 &  & 0.871 & 0.946 &  & 0.990 & 1.000 &  & 0.056 &  & 0.820 & 0.912 &  & 0.984 & 0.999 \\ 
  & & &  30 & & 0.070 &  & 0.831 & 0.946 &  & 0.979 & 1.000 \\ 
     & 500 &  & 12 & & 0.055 &  & 0.947 & 0.997 &  & 0.999 & 1.000 \\ 
 &  &  & 22 & & 0.066 &  & 0.946 & 0.994 &  & 0.999 & 1.000 &  & 0.061 &  & 0.920 & 0.970 &  & 0.996 & 1.000 \\ 
   & & &  41 &  & 0.071 &  & 0.921 & 0.976 &  & 0.996 & 1.000 \\ 
$H = 0.7$  &  300 & & 9 &  & 0.057 &  & 0.571 & 0.821 &  & 0.990 & 0.994 \\ 
 & & & 17& & 0.064 &  & 0.527 & 0.738 &  & 0.876 & 0.990 &  & 0.070 &  & 0.529 & 0.702 &  & 0.856 & 0.982 \\ 
  & & &  30 & & 0.077 &  & 0.527 & 0.738 &  & 0.842 & 0.975 \\ 
     & 500 &  & 12 & & 0.066 &  & 0.693 & 0.904 &  & 0.949 & 0.999 \\ 
 &  & & 22 &  & 0.068 &  & 0.684 & 0.893 &  & 0.942 & 0.998 &  & 0.076 &  & 0.663 & 0.820 &  & 0.940 & 0.995 \\ 
   & & &  41 &  & 0.072 &  & 0.632 & 0.838 &  & 0.921 & 0.994 \\ 
   $H = 0.8$  &  300 &  & 9 & & 0.070 &  & 0.355 & 0.574 &  & 0.703 & 0.931 \\ 
 & & & 17 & & 0.068 &  & 0.284 & 0.454 &  & 0.666 & 0.905 &  & 0.072 &  & 0.297 & 0.428 &  & 0.640 & 0.875 \\ 
   & & &  30 & & 0.073 &  & 0.284 & 0.454 &  & 0.633 & 0.857 \\ 
     & 500 &  & 12 &  & 0.064 &  & 0.401 & 0.609 &  & 0.738 & 0.948 \\ 
 &  & & 22 & & 0.063 &  & 0.379 & 0.581 &  & 0.714 & 0.933 &  & 0.069 &  & 0.369 & 0.510 &  & 0.715 & 0.920 \\ 
   & & & 41 &  & 0.064 &  & 0.345 & 0.509 &  & 0.688 & 0.903 \\
   $H = 0.9$  &  300 & & 9 &  & 0.093 &  & 0.253 & 0.396 &  & 0.597 & 0.832 \\ 
 &  & & 17 & & 0.071 &  & 0.168 & 0.254 &  & 0.532 & 0.772 &  & 0.073 &  & 0.165 & 0.236 &  & 0.499 & 0.738 \\ 
   & & &  30 & & 0.073 &  & 0.168 & 0.254 &  & 0.482 & 0.729 \\ 
     & 500 &  &12 & 
 & 0.073 &  & 0.256 & 0.405 &  & 0.585 & 0.839 \\ 
 &  & & 22 & & 0.064 &  & 0.219 & 0.340 &  & 0.547 & 0.802 &  & 0.068 &  & 0.199 & 0.296 &  & 0.529 & 0.782 \\ 
   & & &  41 &  & 0.065 &  & 0.190 & 0.296 &  & 0.503 & 0.762 \\ 
\end{tabular}
\end{table}
\end{landscape}

\begin{landscape}
\begin{table}[htbp]
\caption{Rejection rates of the classical Wilcoxon change-point test obtained by subsampling (left) with block length $l = \lfloor n^{\gamma}\rfloor$, $\gamma \in \left\{0.4, 0.5, 0.6\right\}$,   and by comparison with asymptotic critical values (right) for fractional Gaussian noise   of length $n$ with Hurst parameter $H$.}
\label{Wilcoxon(fGn)}
\begin{tabular}{c c c c c c c c c c c c c c c c c c c c c}
 & & & \multicolumn{7}{l}{sampling window method}
 & &&&
\multicolumn{5}{l}{asymptotic distribution}\\
 \cline{4-12}  \cline{14-20}\\ 
 & &  & & & & &\multicolumn{2}{c}{$\tau = 0.25$} & &\multicolumn{2}{c}{$\tau = 0.5$} & & & & \multicolumn{2}{c}{$\tau = 0.25$} & & \multicolumn{2}{c}{$\tau = 0.5$} \\
 \cline{8-9}  \cline{11-12}     \cline{16-17} \cline{19-20}\\
fGn & $n$ & & $l$  &  & $h = 0$ &  & $h = 0.5$ & $h = 1$ &  & $h = 0.5$ & $h = 1$ &  & $h=0$ &  & $h = 0.5$ & $h = 1$ &  & $h = 0.5$ & $h = 1$ \\ 
 \hline
 $H = 0.6$ & 300 & &  9 & &   0.066 &  & 0.20 & 0.232 &  &  0.386 & 0.591 \\ 
  &  & &17 & & 0.054 &  & 0.223 & 0.411 &  & 0.439 & 0.784 &  & 0.026 &  & 0.096 & 0.160 &  & 0.223 & 0.727 \\ 
 & & & 30 &  & 0.059 &  & 0.264 & 0.529 &  & 0.663 & 0.870 \\ 
  & 500 & & 12 &  & 0.063 &  & 0.285 & 0.436 &  & 0.569 & 0.856 \\ 
 &  & & 22 &  & 0.058 &  & 0.345 & 0.663 &  & 0.627 & 0.952 &  & 0.036 &  & 0.148 & 0.256 &  & 0.378 & 0.897 \\ 
  & & & 41&  & 0.062 &  & 0.397 & 0.789 &  & 0.683 & 0.975 \\ 
   $H = 0.7$ & 300 & & 9 & & 0.052 &  & 0.080 & 0.088 &  & 0.162 & 0.302 \\ 
&  & & 17 &  & 0.049 &  & 0.095 & 0.158 &  & 0.206 & 0.466 &  & 0.035 &  & 0.067 & 0.228 &  & 0.167 & 0.665 \\ 
 & & & 30 & & 0.051 &  & 0.120 & 0.227 &  & 0.267 & 0.593 \\ 
   & 500 & & 12&  & 0.042 &  & 0.104 & 0.153 &  & 0.249 & 0.539 \\ 
 &  & & 22 &   & 0.039 &  & 0.131 & 0.267 &  & 0.287 & 0.689 &  & 0.030 &  & 0.079 & 0.259 &  & 0.225 & 0.714 \\
  & & & 41 &  & 0.046 &  & 0.160 & 0.373 &  & 0.343 & 0.789 \\ 
   $H = 0.8$ & 300 & & 9 &  & 0.028 &  & 0.030 & 0.031 &  & 0.054 & 0.092 \\ 
 &  & & 17 &  & 0.029 &  & 0.038 & 0.048 &  & 0.075 & 0.179 &  & 0.077 &  & 0.153 & 0.421 &  & 0.245 & 0.673 \\ 
 & & & 30 &  & 0.034 &  & 0.057 & 0.088 &  & 0.070 & 0.272 \\ 
   & 500 & & 12 &  & 0.023 &  & 0.031 & 0.036 &  & 0.064 & 0.162 \\ 
 &  & & 22&   & 0.028 &  & 0.044 & 0.070 &  & 0.097 & 0.273 &  & 0.050 &  & 0.112 & 0.439 &  & 0.226 & 0.714 \\ 
  & & & 41 &  & 0.039 &  & 0.071 & 0.129 &  & 0.137 & 0.391 \\ 
   $H = 0.9$ & 300 & & 9 &  & 0.009 &  & 0.010 & 0.006 &  & 0.016 & 0.020 \\ 
 &  & &17&  & 0.009 &  & 0.014 & 0.009 &  & 0.021 & 0.060 &  & 0.36 &  & 0.484 & 0.739 &  & 0.524 & 0.830 \\ 
 & & & 30 &  & 0.015 &  & 0.029 & 0.028 &  & 0.011 & 0.153 \\ 
   & 500 & & 12& 
 & 0.008 &  & 0.006 & 0.003 &  & 0.015 & 0.026 \\ 
 &  & & 22 &  & 0.011 &  & 0.009 & 0.011 &  & 0.029 & 0.086 &  & 0.319 &  & 0.439 & 0.743 &  & 0.511 & 0.845 \\ 
  & & & 41 &  & 0.021 &  & 0.021 & 0.032 &  & 0.058 & 0.197 \\ 
  \end{tabular}
\end{table}
\end{landscape}

  \begin{landscape}
\begin{table}[htbp]
\caption{Rejection rates of the classical Wilcoxon change-point test obtained by subsampling (left) with block length $l = \lfloor n^{\gamma}\rfloor$, $\gamma \in \left\{0.4, 0.5, 0.6\right\}$,   and by comparison with asymptotic critical values (right) for  Pareto($3$, $1$)-transformed fractional Gaussian noise  of length $n$ with Hurst parameter $H$.}
\label{Wilcoxon(Pareto)}
\begin{tabular}{c c c c c c c c c c c c c c c c c c c c c}
 & & & \multicolumn{7}{l}{sampling window method}
 & &&&
\multicolumn{5}{l}{asymptotic distribution}\\
 \cline{4-12}  \cline{14-20}\\ 
 & &  & & & & &\multicolumn{2}{c}{$\tau = 0.25$} & &\multicolumn{2}{c}{$\tau = 0.5$} & & & & \multicolumn{2}{c}{$\tau = 0.25$} & & \multicolumn{2}{c}{$\tau = 0.5$} \\
 \cline{8-9}  \cline{11-12}     \cline{16-17} \cline{19-20}\\
Pareto(3, 1) & $n$ & & $l$  &  & $h = 0$ &  & $h = 0.5$ & $h = 1$ &  & $h = 0.5$ & $h = 1$ &  & $h=0$ &  & $h = 0.5$ & $h = 1$ &  & $h = 0.5$ & $h = 1$ \\ 
 \hline
  $H = 0.6$ & 300 & &  9 & & 0.170 &  & 0.949 & 0.742 &  & 0.991 & 0.923 \\ 
  &  & &  17 & & 0.130 &  & 0.963 & 0.861 &  & 0.996 & 0.991 &  & 0.108 &  & 0.938 & 0.985 &  & 0.998 & 1.000 \\ 
 &  & & 30 &  & 0.109 &  & 0.962 & 0.871 &  & 0.998 & 0.998 \\ 
   & 500 & &  12 &  & 0.163 &  & 0.991 & 0.916 &  & 1.000 & 0.993 \\ 
  &  & &  22 &  & 0.132 &  & 0.997 & 0.976 &  & 1.000 & 0.999 &  & 0.128 &  & 0.988 & 0.999 &  & 1.000 & 1.000 \\ 
    &  & &  41 &  & 0.114 &  & 0.997 & 0.989 &  & 1.000 & 1.000 \\ 
  $H = 0.7$ & 300 & &  9 &  & 0.224 &  & 0.785 & 0.568 &  & 0.939 & 0.796 \\ 
  &  & &  17 &  & 0.175 &  & 0.802 & 0.680 &  & 0.955 & 0.949 &  & 0.179 &  & 0.833 & 0.969 &  & 0.974 & 0.999 \\ 
  &  & &  30 & 
 & 0.140 &  & 0.789 & 0.708 &  & 0.959 & 0.976 \\ 
    & 500 & &  12 &  & 0.208 &  & 0.921 & 0.763 &  & 0.989 & 0.956 \\ 
  &  & &  22 &  & 0.167 &  & 0.931 & 0.862 &  & 0.992 & 0.996 &  & 0.191 &  & 0.940 & 0.994 &  & 0.996 & 1.000 \\ 
  &  & &  41 & & 0.143 &  & 0.925 & 0.891 &  & 0.994 & 0.998 \\
  $H = 0.8$ & 300  & &  9 &  & 0.203 &  & 0.508 & 0.326 &  & 0.743 & 0.565 \\ 
  &  & &  17 &  & 0.160 &  & 0.496 & 0.347 &  & 0.776 & 0.808 &  & 0.204 &  & 0.729 & 0.925 &  & 0.918 & 0.993 \\ 
  &  & &  30 &  & 0.137 &  & 0.484 & 0.364 &  & 0.791 & 0.881 \\ 
    & 500 & &  12 &  & 0.190 &  & 0.639 & 0.445 &  & 0.865 & 0.770 \\ 
  &  & &  22 & & 0.160 &  & 0.649 & 0.513 &  & 0.886 & 0.929 &  & 0.212 &  & 0.805 & 0.963 &  & 0.948 & 0.999 \\ 
  &  & &  41 &  & 0.137 &  & 0.626 & 0.556 &  & 0.890 & 0.961 \\ 
  $H = 0.9$ &  300 & &  9 & 
 & 0.128 &  & 0.150 & 0.077 &  & 0.320 & 0.336 \\ 
  &  & &  17 & & 0.097 &  & 0.128 & 0.071 &  & 0.403 & 0.550 &  & 0.309 &  & 0.712 & 0.901 &  & 0.848 & 0.966 \\ 
  &  & &  30 &  & 0.092 &  & 0.125 & 0.077 &  & 0.481 & 0.677 \\ 
    & 500 & & 12 &  & 0.112 &  & 0.159 & 0.089 &  & 0.402 & 0.436 \\
  &  & &   22 &  & 0.100 &  & 0.161 & 0.101 &  & 0.518 & 0.680 &  & 0.27 &  & 0.726 & 0.911 &  & 0.851 & 0.975 \\ 
  &  & & 41  &  & 0.095 &  & 0.170 & 0.106 &  & 0.571 & 0.771 \\ 
\end{tabular}
\end{table}
\end{landscape}

\newpage

\section{Proofs}\label{sec:proof}

\subsection{Auxiliary Results}

\begin{Lem}\label{lemA} Under Assumption \ref{dgp}, there is a constant  $K_D<\infty$, such that for all $x_1,\ldots,x_l\in\R$ with $\Var(\sum_{i=1}^lx_i\xi_i)=1$
\begin{equation*}
\sum_{i=1}^lx_i^2\leq K_D.
\end{equation*}
\end{Lem}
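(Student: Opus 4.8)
The plan is to pass to the spectral domain, where the quadratic form $\Var(\sum_{i=1}^l x_i\xi_i)$ becomes an integral against the spectral density, and then to bound that density below by a positive constant that does not depend on $l$. Using the spectral representation $\gamma(k)=\int_{-\pi}^{\pi}e^{ik\lambda}f(\lambda)\,d\lambda$ (which is available since part 2 of Assumption \ref{dgp} guarantees a spectral density), bilinearity of the covariance gives
\[
\Var\Big(\sum_{i=1}^l x_i\xi_i\Big)=\sum_{i,j=1}^l x_ix_j\gamma(i-j)=\int_{-\pi}^{\pi}\Big|\sum_{j=1}^l x_j e^{ij\lambda}\Big|^2 f(\lambda)\,d\lambda,
\]
since $x_1,\dots,x_l$ are real. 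The useful feature of this identity is that the trigonometric polynomial $P(\lambda):=\sum_{j=1}^l x_je^{ij\lambda}$ now appears explicitly, and Parseval's identity relates its $L^2$-norm directly to the quantity we want to control: $\int_{-\pi}^{\pi}|P(\lambda)|^2\,d\lambda=2\pi\sum_{j=1}^l x_j^2$.

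The heart of the argument, and the only place where a quantitative input from Assumption \ref{dgp} is needed, is a uniform lower bound $f(\lambda)\geq c'>0$ on $[-\pi,\pi]$. By part 2 of Assumption \ref{dgp}, $f(\lambda)=|\lambda|^{D-1}L_f(\lambda)$ with $L_f$ bounded away from $0$ on $[0,\pi]$, and since $f$ is even this lower bound extends to $[-\pi,\pi]$. Because $D\in(0,1]$ we have $D-1\leq 0$, so $|\lambda|^{D-1}$ is non-increasing in $|\lambda|$ and attains its minimum over the punctured interval at $|\lambda|=\pi$, giving $|\lambda|^{D-1}\geq\pi^{D-1}>0$. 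Multiplying the two bounds yields $f(\lambda)\geq\pi^{D-1}\inf_{[0,\pi]}L_f=:c'>0$, a constant that does not depend on $l$.

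Combining these ingredients with the normalization $\Var(\sum_{i=1}^l x_i\xi_i)=1$, I would conclude
\[
1=\int_{-\pi}^{\pi}|P(\lambda)|^2 f(\lambda)\,d\lambda\geq c'\int_{-\pi}^{\pi}|P(\lambda)|^2\,d\lambda=2\pi c'\sum_{j=1}^l x_j^2,
\]
so that $\sum_{j=1}^l x_j^2\leq(2\pi c')^{-1}=:K_D$, which is the claimed bound with a constant depending only on the quantities in Assumption \ref{dgp} and not on $l$. (Equivalently, this exhibits a uniform lower bound on the smallest eigenvalue of the Toeplitz covariance matrix.)

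I do not anticipate a serious obstacle: the uniformity in $l$ is automatic, since neither $c'$ nor the Parseval constant depends on $l$. The one point deserving care is the lower bound on $|\lambda|^{D-1}$, which is exactly where the restriction $D\in(0,1]$ is used — for $D>1$ the exponent is positive, the minimum occurs at $\lambda=0$ and equals $0$, and no positive lower bound on $f$ exists; this is consistent with the fact that the case $D>1$ is treated separately via strong mixing. One should also fix the precise normalization constant of the spectral representation employed (the FARIMA example fixes the convention $\gamma(k)=\int_{-\pi}^{\pi}e^{ik\lambda}f(\lambda)\,d\lambda$), which affects only the explicit value of $K_D$, not the argument.
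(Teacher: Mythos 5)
Your proposal is correct and follows essentially the same route as the paper's proof: pass to the spectral domain, bound $f(\lambda)\geq C_{\min}\pi^{D-1}$ using $L_f$ bounded away from zero and $D\in(0,1]$, and identify $\int|\sum_j x_j e^{ij\lambda}|^2\,d\lambda$ with $\sum_j x_j^2$ up to a constant. The only cosmetic difference is that you invoke Parseval on $[-\pi,\pi]$ where the paper verifies the identity by hand on $[0,\pi]$ via the vanishing of $\int_0^\pi\cos((k-j)\lambda)\,d\lambda$ for $j\neq k$; the resulting constant $K_D=1/(2C_{\min}\pi^{D})$ is the same.
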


\begin{proof} Recall that we can rewrite the covariances as
\begin{equation*}
\gamma(k)=\int_{-\pi}^{\pi}\e^{ik\lambda}f(\lambda)d\lambda
\end{equation*}
and that the spectral density $f$ can be written as $f(\lambda)=L_f(|\lambda|)|\lambda|^{D-1}$. By our assumptions $L_f(x)\geq C_{\text{min}}$ for a constant $C_{\text{min}}>0$, so that we can conclude that
\begin{align*}
1=&\Var\Big(\sum_{i=1}^lx_i\xi_i\Big)=\sum\limits_{1\leq j, k\leq l} x_jx_k\gamma(j-k)\\
=&\sum\limits_{1\leq j, k\leq l} x_jx_k\int_{-\pi}^{\pi}\e^{i(j-k)\lambda}f(\lambda)d\lambda=\sum\limits_{1\leq j, k\leq l} x_jx_k\int_{-\pi}^{\pi}\e^{i(j-k)\lambda}L_f(|\lambda|)|\lambda|^{D-1}d\lambda\displaybreak[0]\\
=&2\int_{0}^{\pi}\!\sum\limits_{1\leq j, k\leq l} x_jx_k\e^{i(j-k)\lambda}L_f(\lambda)\lambda^{D-1}d\lambda\displaybreak[0]=2\int_{0}^{\pi}\bigg|\sum\limits_{j=1}^lx_j\e^{-ij\lambda}\bigg|^2L_f(\lambda)\lambda^{D-1}d\lambda\\
\geq& 2C_{\text{min}}\pi^{D-1} \int_{0}^{\pi}\bigg|\sum\limits_{j=1}^lx_j\e^{-ij\lambda}\bigg|^2d\lambda.
\end{align*}
We rewrite the integrand as
\begin{align*}
\bigg|\sum\limits_{j=1}^lx_j\e^{-ij\lambda}\bigg|^2
&=\sum\limits_{1\leq j, k\leq l}x_jx_k\e^{-ij\lambda}\e^{ik\lambda}=\sum\limits_{ j=1}^lx_j^2+\sum\limits_{ j\neq k}x_jx_k\e^{-i(j-k)\lambda}\displaybreak[0]\\
&=\sum\limits_{ j=1}^lx_j^2+\sum\limits_{ j < k}x_jx_k\left(\e^{-i(j-k)\lambda}+\e^{-i(k-j)\lambda}\right)\\
&=\sum\limits_{ j=1}^lx_j^2+2\sum\limits_{ j < k}x_jx_k\cos((k-j)\lambda)=\sum\limits_{1\leq j, k\leq l}x_jx_k\cos((k-j)\lambda).
\end{align*}
As a result, we have
\begin{align*}
 \int_{0}^{\pi}
\bigg|\sum\limits_{j=1}^lx_j\e^{-ij\lambda}\bigg|^2d\lambda
&= \int_{0}^{\pi}\sum\limits_{1\leq j, k\leq l} x_jx_k\cos((k-j)\lambda)
d\lambda\displaybreak[0]\\
&= \sum\limits_{1\leq j, k\leq l} x_jx_k\int_{0}^{\pi}\cos((k-j)\lambda)
d\lambda\displaybreak[0]\\
&= \sum\limits_{ j=1}^lx_j^2\int_{0}^{\pi}\cos(0)
d\lambda+\sum\limits_{ j\neq k}x_jx_k\int_{0}^{\pi}\cos((k-j)\lambda)
d\lambda\\
&=\pi  \sum\limits_{ j=1}^lx_j^2.
\end{align*}
All in all, this yields
\begin{equation*}
1=\Var\left(\sum_{i=1}^lx_i\xi_i\right)\geq 2 C_{\text{min}}	\pi^{D-1} \int_{0}^{\pi}
\bigg|\sum\limits_{j=1}^lx_j\e^{-ij\lambda}\bigg|^2d\lambda= 2 C_{\text{min}}	\pi^{D}\sum\limits_{ j=1}^lx_j^2.
\end{equation*}
Therefore, the statement of the lemma holds with $K_D=1/(2 C_{\text{min}}	\pi^{D})$.

\end{proof}

\begin{Lem}\label{lemB} Under Assumption \ref{dgp}, there are constants  $K'_D<\infty$ and $l_0\in\N$ such that 
\begin{equation*}
\bigg|\sum_{i=1}^lx_i\bigg|\leq K'_Dl^{D/2}
\end{equation*}
for all $l\geq l_0$ and $x_1,\ldots,x_l\in\R$ with $\Var\left(\sum_{i=1}^lx_i\xi_i\right)=1$.
\end{Lem}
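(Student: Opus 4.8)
The plan is to reduce the statement to a weighted polynomial inequality by reusing the spectral computation from the proof of Lemma \ref{lemA}, and then to prove that inequality by localising near the spectral pole at $0$. Write $W(\lambda)=\sum_{j=1}^l x_j\e^{-ij\lambda}$, so that $\sum_{i=1}^l x_i=W(0)$. Exactly as in Lemma \ref{lemA}, the normalisation $\Var\big(\sum_{i=1}^l x_i\xi_i\big)=1$ together with $f(\lambda)=L_f(\lambda)\lambda^{D-1}$ and $L_f\geq C_{\text{min}}>0$ gives
\begin{equation*}
1=2\int_0^\pi|W(\lambda)|^2 L_f(\lambda)\lambda^{D-1}\,d\lambda\geq 2C_{\text{min}}\int_0^\pi|W(\lambda)|^2\lambda^{D-1}\,d\lambda .
\end{equation*}
Hence it suffices to prove a weighted Nikolskii-type inequality: there is $C_D<\infty$ such that for every trigonometric polynomial $W$ of degree at most $l$,
\begin{equation*}
|W(0)|^2\leq C_D\,l^{D}\int_0^\pi|W(\lambda)|^2\lambda^{D-1}\,d\lambda. \tag{$\dagger$}
\end{equation*}
Combining the two displays yields $\big|\sum_{i=1}^l x_i\big|^2=|W(0)|^2\leq \frac{C_D}{2C_{\text{min}}}\,l^{D}$, i.e. the assertion with $K'_D=(C_D/(2C_{\text{min}}))^{1/2}$.

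It is worth stressing why one cannot stop earlier: if one bounded $\lambda^{D-1}\geq\pi^{D-1}$ on $[0,\pi]$, as in Lemma \ref{lemA}, then $(\dagger)$ would collapse to the \emph{unweighted} Nikolskii inequality and produce only the Cauchy--Schwarz bound $\big|\sum x_i\big|\leq\sqrt{l}\,\big(\sum x_i^2\big)^{1/2}\leq\sqrt{K_D\,l}$. The improvement from $l^{1/2}$ to $l^{D/2}$ must therefore come from the singular weight $\lambda^{D-1}$, which amplifies the low-frequency part of $W$ where the constant coefficient vector concentrates; this is exactly the long-range-dependence feature, so no shortcut avoiding $(\dagger)$ is possible.

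To prove $(\dagger)$, I would set $\delta=1/l$ (and choose $l_0$ so that $\delta\leq\pi$). For $\lambda\in[0,\delta]$ write $W(0)=W(\lambda)-\int_0^\lambda W'(t)\,dt$, so that by Cauchy--Schwarz $|W(0)|^2\leq 2|W(\lambda)|^2+2\delta\int_0^\delta|W'(t)|^2\,dt$. Multiplying by $\lambda^{D-1}$, integrating over $\lambda\in[0,\delta]$ and dividing by $\int_0^\delta\lambda^{D-1}\,d\lambda=\delta^D/D$ gives
\begin{equation*}
|W(0)|^2\leq \frac{2D}{\delta^{D}}\int_0^\delta|W(\lambda)|^2\lambda^{D-1}\,d\lambda+2\delta\int_0^\delta|W'(t)|^2\,dt .
\end{equation*}
The first term equals $2D\,l^{D}\int_0^\delta|W|^2\lambda^{D-1}\leq 2D\,l^{D}\int_0^\pi|W|^2\lambda^{D-1}$, which is already of the required form.

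It remains to absorb the derivative term, and this is where I expect the main difficulty. Since $D\leq1$, for $t\in(0,\delta]$ we have $t^{D-1}\geq\delta^{D-1}=l^{1-D}$, whence $\int_0^\delta|W'(t)|^2\,dt\leq l^{D-1}\int_0^\delta|W'(t)|^2 t^{D-1}\,dt$, and therefore $2\delta\int_0^\delta|W'|^2\,dt\leq 2l^{D-2}\int_0^\pi|W'(\lambda)|^2\lambda^{D-1}\,d\lambda$. Everything thus rests on a weighted Markov--Bernstein inequality for the singular Jacobi weight $\lambda^{D-1}$, namely $\int_0^\pi|W'(\lambda)|^2\lambda^{D-1}\,d\lambda\leq C\,l^{2}\int_0^\pi|W(\lambda)|^2\lambda^{D-1}\,d\lambda$. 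This is the hard part: the plain Parseval--Bernstein bound over the full circle is unweighted and, after reinserting the weight, only yields an $l$-factor rather than $l^{D}$, so the weight must genuinely be carried through. I would establish it by invoking known weighted polynomial inequalities for such Jacobi-type weights, or directly by testing $W$ against a Fej\'er-type kernel of degree $\asymp l$ concentrated in $[0,1/l]$ and estimating the resulting convolution. Granting this bound, the derivative term is at most $2C\,l^{D}\int_0^\pi|W|^2\lambda^{D-1}$, which together with the first term proves $(\dagger)$ and hence the lemma.
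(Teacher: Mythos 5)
Your reduction is correct, and your route is genuinely different from the paper's. The paper reformulates the claim as the lower bound $\Var\big(\sum_{i=1}^l x_i\xi_i\big)\geq Cl^{-D}$ under the constraint $\sum_{i=1}^l x_i=1$, recognizes the minimizer as the best linear unbiased estimator of the mean, and then imports two results of Adenstedt \citep{Adenstedt1974}: the order-$l^{-D}$ lower bound for the BLUE variance under the reference spectral density $\frac{1}{2\pi}|1-e^{ix}|^{D-1}$, and his Lemma 4.4 to transfer this bound to the actual $f$, using exactly the hypothesis that $L_f$ is bounded away from $0$ (via boundedness of the ratio of spectral densities). Your inequality $(\dagger)$, restricted to $W(0)=1$, is precisely the dual formulation of Adenstedt's extremal problem, so the two proofs attack the same quantity; the difference is that you prove the extremal bound by hand, through localisation at the spectral pole plus a weighted Bernstein inequality, instead of citing it. Your localisation step and bookkeeping are correct ($\delta=1/l$, the factor $2\delta\int_0^\delta|W'|^2\leq 2l^{D-2}\int_0^\pi|W'|^2\lambda^{D-1}\,d\lambda$, and the recombination giving $C_D\,l^D$), and your structural remark is also right: bounding $\lambda^{D-1}$ below by $\pi^{D-1}$, as in Lemma \ref{lemA}, can only yield $\sqrt{l}$, so the singular weight must be carried through. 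The one step you leave unproven, $\int_0^\pi|W'(\lambda)|^2\lambda^{D-1}\,d\lambda\leq Cl^2\int_0^\pi|W(\lambda)|^2\lambda^{D-1}\,d\lambda$, is indeed a known theorem rather than a hole in the mathematics: since the $x_j$ are real, $|W|$ and $|W'|$ are even, so you may pass to $[-\pi,\pi]$ with the even weight $|\lambda|^{D-1}$, which is a doubling weight for $D>0$, and the $L^p$ Bernstein inequality for trigonometric polynomials with doubling weights is due to Mastroianni and Totik (Constr.\ Approx.\ 16 (2000)). With that citation supplied your argument is complete, and it is self-contained modulo a literature reference in the same way the paper's proof is modulo Adenstedt. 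What your route buys is transparency — it exhibits exactly how the amplification of low frequencies by the pole converts $l^{1/2}$ into $l^{D/2}$; what the paper's route buys is brevity, since Adenstedt's corollary delivers the sharp extremal bound in one stroke.
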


\begin{proof} The statement of the proof is equivalent to the existence of a constant $C>0$, such that for all 
$x_1,\ldots,x_l\in\R$ with $\sum_{i=1}^lx_i=1$, we have 
\begin{equation*}
\Var\left(\sum_{i=1}^lx_i\xi_i\right)\geq Cl^{-D}.
\end{equation*}
Let  $x_1^\star,\ldots,x_l^\star\in\R$ with $\sum_{i=1}^lx_i^\star=1$ be the values that minimize $\Var\left(\sum_{i=1}^lx_i^\star\xi_i\right)$. Then $\hat{\mu}_{\xi}(\xi_1,\ldots,\xi_n):=\sum_{i=1}^lx_i^\star\xi_i$ is the best linear unbiased estimator for $\mu:=E(\xi_1)$.
For a process $(\zeta_n)_{n\in\N}$ with spectral density
\begin{equation*}
f_{\zeta}(x)=\frac{1}{2\pi}\left|1-e^{ix}\right|^{D-1},
\end{equation*}
we have 
\begin{equation*}
\Var\left(\hat{\mu}_{\zeta}(\zeta_1,\ldots,\zeta_n)\right)\geq C_1l^{-D}
\end{equation*}
for a constant $C_1>0$ by a Corollary  of Adenstedt \citep{Adenstedt1974} (see p. 1101). We rewrite the spectral density $f_{\zeta}$ of $(\zeta_n)_{n\in\N}$ with the help of the spectral density $f$ of $(\xi_n)_{n\in\N}$ as
\begin{equation*}
f_{\zeta}(x)=f(x)\frac{\left|1-e^{ix}\right|^{D-1}}{2\pi|x|^{D-1}L_f(x)}.
\end{equation*}
Note that the function $g$ with $g(x)=\frac{\left|1-e^{ix}\right|^{D-1}}{2\pi|x|^{D-1}L_f(x)}$ is bounded, as we assumed that $L_f$ is bounded away from $0$.  Hence, we have
\begin{equation*}
\Var\left(\hat{\mu}_{\xi}(\xi_1,\ldots,\xi_n)\right)\geq \frac{1}{g(0)}\Var\left(\hat{\mu}_{\zeta}(\zeta_1,\ldots,\zeta_n)\right)\geq Cl^{-D}
\end{equation*}
 for all $l\geq l_0$
by Lemma 4.4 in \citep{Adenstedt1974}.
\end{proof}

The next Lemma deals with the $\rho$-mixing coefficient, which is defined in the following way: Let $\mathcal{A},\mathcal{B}$ be two $\sigma$-fields. Then
\begin{equation*}
\rho(\mathcal{A},\mathcal{B}):=\sup \corr(X,Y),
\end{equation*}
where the supremum is taken over all $\mathcal{A}$-measurable random variables $X$ and all $\mathcal{B}$-measurable random variables $Y$. For details we recommend the book of Bradley \citep{bradley2007introduction}.

\begin{Lem}\label{LemC} Under Assumption \ref{dgp}, there are constants $C_1,C_2<\infty$ such that
\begin{multline*}
\rho(k,l):=\rho\big(\sigma(\xi_i, 1\leq i\leq l), \sigma(\xi_j, k+l+1\leq j\leq k+2l)\big)\\
\leq C_1\left(k/l\right)^{-D}L_{\gamma}(k)+C_2l^2k^{-D-1}\max\{L_{\gamma}(k),1\}
\end{multline*}
for all $k\in\N$ and all $l\in\{l_k,\ldots,k\}$.
\end{Lem}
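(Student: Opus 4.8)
The plan is to reduce the maximal correlation between the two Gaussian blocks to a maximal correlation over \emph{linear} functionals, and then to estimate the resulting covariance with the help of Lemmas \ref{lemA} and \ref{lemB}. First I would invoke the classical fact (see Bradley \citep{bradley2007introduction}) that for a jointly Gaussian family the maximal correlation between the $\sigma$-fields generated by two finite collections of variables coincides with the largest canonical correlation, i.e. with the supremum of $\corr\big(\sum_{i=1}^l a_i\xi_i,\sum_{j=1}^l b_j\xi_{k+l+j}\big)$ taken over real coefficients $a_i,b_j$. After normalizing so that $\Var(\sum_i a_i\xi_i)=\Var(\sum_j b_j\xi_{k+l+j})=1$ (using stationarity for the second block), Lemma \ref{lemA} supplies $\sum_i a_i^2\le K_D$ and $\sum_j b_j^2\le K_D$, while Lemma \ref{lemB} supplies $|\sum_i a_i|\le K'_D l^{D/2}$ and $|\sum_j b_j|\le K'_D l^{D/2}$ for $l\ge l_0$.

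The quantity to be bounded is then $\sum_{i,j}a_ib_j\,\gamma(k+l+j-i)$, where the argument $k+l+j-i$ ranges over $\{k+1,\ldots,k+2l-1\}$. The decisive step is the decomposition $\gamma(k+l+j-i)=\gamma(k)+\big(\gamma(k+l+j-i)-\gamma(k)\big)$. The constant part contributes $\gamma(k)\big(\sum_i a_i\big)\big(\sum_j b_j\big)$, whose modulus is at most $k^{-D}L_\gamma(k)\cdot(K'_D)^2l^{D}=(K'_D)^2(k/l)^{-D}L_\gamma(k)$ by Lemma \ref{lemB}; this already yields the first term of the claimed bound with $C_1=(K'_D)^2$.

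For the remainder $\sum_{i,j}a_ib_j\big(\gamma(k+l+j-i)-\gamma(k)\big)$ I would first prove a uniform increment estimate $|\gamma(m)-\gamma(k)|\le C\,l\,k^{-D-1}\max\{L_\gamma(k),1\}$ for $m\in\{k+1,\ldots,k+2l-1\}$. Writing $\gamma(m)-\gamma(k)=(m^{-D}-k^{-D})L_\gamma(m)+k^{-D}\big(L_\gamma(m)-L_\gamma(k)\big)$, the mean value theorem bounds $|m^{-D}-k^{-D}|$ by $2D\,l\,k^{-D-1}$ (as $m-k\le 2l-1$), while part~1 of Assumption \ref{dgp}, applied with $l'=l\in\{l_k,\ldots,k\}$, controls $|L_\gamma(m)-L_\gamma(k)|$ by $K\frac{l}{k}\min\{L_\gamma(k),1\}$ and also gives $L_\gamma(m)\le(1+K)\max\{L_\gamma(k),1\}$. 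Combining these provides the increment estimate. An $\ell^1$--$\ell^\infty$ bound together with Cauchy--Schwarz and Lemma \ref{lemA} then yields $\big(\sum_i|a_i|\big)\big(\sum_j|b_j|\big)\le l\,K_D$, so the remainder is at most $C\,K_D\,l^2k^{-D-1}\max\{L_\gamma(k),1\}$, matching the second term with $C_2=C\,K_D$. Adding the two contributions and passing to the supremum over admissible coefficients proves the lemma; the finitely many small $k$ for which $l_k<l_0$ are absorbed into the constants.

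The main obstacle I anticipate is the first, conceptual step: passing from the genuinely nonlinear definition of the $\rho$-mixing coefficient, a supremum over \emph{all} measurable functions of each block, to a supremum over linear combinations, which is exactly what makes Lemmas \ref{lemA} and \ref{lemB} applicable. Once this reduction is in place, the remaining work is a careful but routine estimation in which the gap $k$ between the blocks enters only through $\gamma(k)$ and its increments, and the slow-variation hypothesis in Assumption \ref{dgp} is precisely tailored to control the increments of $L_\gamma$ over the relevant window $\{k+1,\ldots,k+2l-1\}$.
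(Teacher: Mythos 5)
Your proposal is correct and follows essentially the same route as the paper's proof: the Kolmogorov--Rozanov reduction of $\rho$ to linear functionals of the Gaussian blocks, the decomposition of the covariance into a $\gamma(k)$ part controlled via Lemma \ref{lemB} and an increment part controlled via the mean value theorem, the slow-variation hypothesis, and the $\ell^1$-bound $\bigl(\sum_i|a_i|\bigr)\bigl(\sum_j|b_j|\bigr)\leq K_D l$ from Lemma \ref{lemA}. The only cosmetic differences are that you attach $L_\gamma(m)$ rather than $L_\gamma(k)$ to the power-difference term (handled by your observation $L_\gamma(m)\leq(1+K)\max\{L_\gamma(k),1\}$) and that the small-$l$ cases with $l<l_0$ are more accurately dispatched by the Cauchy--Schwarz bound $|\sum_i a_i|\leq\sqrt{K_D l}\leq\sqrt{K_D l_0}\,l^{D/2}$ than by an appeal to finitely many $k$, a point the paper itself leaves implicit.
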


\begin{proof} Kolmogorov and Rozanov \citep{kolmogorov1960strong}  proved that there exist real numbers $a_1, a_2, \ldots, a_l$, $b_1, b_2, \ldots, b_l$ such that
\begin{equation*}
\rho\big(\sigma(\xi_i, 1\leq i\leq l), \sigma(\xi_j, k+l+1\leq j\leq k+2l)\big)
=\Cov\Big(\sum\limits_{i=1}^l a_i\xi_i, \sum\limits_{j=1}^l b_j\xi_{k+l+j}\Big)
\end{equation*}
and $\Var\big(\sum_{i=1}^{l}a_i\xi_i\big)=\Var\big(\sum_{j=1}^lb_j\xi_{k+l+j}\big)=1$. The triangular inequality yields
\begin{multline*}
\bigg|\Cov\Big(\sum\limits_{i=1}^la_i \xi_i, \sum_{j=1}^lb_j\xi_{k+l+j}\Big)\bigg|\\
\leq \Big|\sum\limits_{i=1}^la_i\sum\limits_{j=1}^{l}b_j\Big|\left|\gamma(k)\right|
+\sum\limits_{i=1}^l\sum_{j=1}^{l}|a_i||b_j|\left|\gamma(k)-\gamma(k+l+j-i)\right|.
\end{multline*}
We will treat the two summands on the right hand side separately. For the first term, it follows by Lemma \ref{lemB} that 
\begin{equation*}
\Big|\sum\limits_{i=1}^la_i\sum_{j=1}^{l}b_j\Big|\left|\gamma(k)\right|=\Big|\sum\limits_{i=1}^la_i\Big|\Big|\sum_{j=1}^{l}b_j\Big|\left|\gamma(k)\right|\leq K'^2_dl^{D}L_\gamma(k)k^{-D}.
\end{equation*}
Before we deal with the second summand, we  observe that by H\"older's inequality and Lemma \ref{lemA}
\begin{equation*}
\sum\limits_{i=1}^l|a_i|\leq \sqrt{l\sum\limits_{i=1}^la_i^2}\leq \sqrt{K_D}\sqrt{l}\ \ \text{and} \ \ \sum\limits_{j=1}^{l}|b_j|\leq \sqrt{l\sum\limits_{j=1}^lb_j^2}\leq\sqrt{K_D}\sqrt{l}.
\end{equation*}
Due to  Assumption \ref{dgp}
\begin{equation*}
\sup\limits_{|k-\tilde{k}|\leq 2l-1}\left|L_{\gamma}(k)-L_{\gamma}(\tilde{k})\right|\leq K\frac{l}{k}
\end{equation*}
for some constant $K$.

Consequently, for all $\tilde{k} \in \left\{k+1, \ldots, k+2l-1\right\}$
\begin{align*}
\left|\gamma(k)-\gamma(\tilde{k})\right|
&\leq  L_\gamma(k) \left|k^{-D}-\tilde{k}^{-D}\right|+|L_\gamma(k)-L_\gamma(\tilde{k})|\tilde{k}^{-D}\\
&\leq  L_\gamma(k) \left(k^{-D}-(k+2l-1)^{-D}\right)+|L_\gamma(k)-L_\gamma(\tilde{k})|k^{-D}\\
&\leq C_d k^{-D-1}lL_\gamma(k)+K\frac{l}{k}k^{-D}\max\{L_{\gamma}(k),1\}\\
&\leq C_3k^{-D-1}l\max\{L_{\gamma}(k),1\}
\end{align*}
for some constants $C_d$, $C_3$. Combining this with the bounds for $\sum_{i=1}^l|a_i|$, $\sum_{j=1}^{l}|b_j|$, we finally arrive at
\begin{align*}
\sum\limits_{i=1}^l|a_i|\sum\limits_{j=1}^{l}|b_j|\left|\gamma(k)-\gamma(k+l+j-i)\right|
&\leq K_Dl\max\limits_{\tilde{k} \in \left\{k+1, \ldots, k+2l-1\right\}}\left|\gamma(k)-\gamma(\tilde{k})\right|\\
&=K_DC_3k^{-D-1}l^2\max\{L_{\gamma}(k),1\}.
\end{align*}
\end{proof}

\subsection{Proof of the Main Result}

Let $t$ be a point of continuity of $F_T$. In order to simplify notation, we write $N=n-l+1$ and $T_{l, i}=T_l(X_i, \ldots, X_{i+l-1})$. The triangular inequality yields
\begin{align*}
|\hat{F}_{l, n}(t)-F_{T_n}(t)|\leq |\hat{F}_{l, n}(t)-F_T(t)|+|F_{T}(t)-F_{T_n}(t)|.
\end{align*}
The second term on the right-hand side of the above inequality converges to zero because of Assumption \ref{convergence of T_n}. As $L_2$-convergence implies stochastic convergence,
it suffices to show that 
 \begin{align*}
 \E\left(|\hat{F}_{l, n}(t)-F_T(t)|^2\right)\longrightarrow 0
 \end{align*}
in order to prove that the first term converges to zero, as well.
We have
 \begin{align*}
  &\E\left(|\hat{F}_{l, n}(t)-F_T(t)|^2\right)\\
    &=\E \left(\hat{F}_{l, n}^2(t)\right)-\left(\E \hat{F}_{l, n}(t)\right)^2+\left(F_T(t)\right)^2-2F_T(t)\E \hat{F}_{l, n}(t)+\left(\E \hat{F}_{l, n}(t)\right)^2\\
    &=\Var(\hat{F}_{l, n}(t))+\left|\E \hat{F}_{l, n}(t)-F_T(t)\right|^2.
 \end{align*}
Furthermore, stationarity of the process $\left(X_n\right)_{n\in \mathbb{N}}$ and  Assumption \ref{convergence of T_n} imply
 \begin{equation*}
\E \hat{F}_{l, n}(t) =\frac{1}{N}\sum\limits_{i=1}^N \E\left(1_{\left\{T_{l, i}\leq t\right\}}\right)
  =P\left(T_{l, 1}\leq t\right)  =F_{T_l}(t)\xrightarrow{l\rightarrow\infty} F_T(t).
 \end{equation*}
It remains  to show that   $\Var (\hat{F}_{l, n}(t))\longrightarrow 0$.
Again,  it follows by stationarity of $\left(X_n\right)_{n\in \mathbb{N}}$ that
 \begin{align*}
 \Var \left(\hat{F}_{l, n}(t)\right)
              &=\frac{1}{N}\Var\left(1_{\left\{T_{l, 1}\leq t\right\}}\right)+\frac{2}{N^2}\sum\limits_{i=2}^{N}(N-i+1)\Cov\left(1_{\left\{T_{l, 1}\leq t\right\}}, 1_{\left\{T_{l, i}\leq t\right\}}\right)\\
 &\leq  \frac{2}{N}\sum\limits_{i=1}^{N}\left|\Cov\left(1_{\left\{T_{l, 1}\leq t\right\}}, 1_{\left\{T_{l, i}\leq t\right\}}\right)\right|.
 \end{align*}
Recall that by Assumption \ref{blocklength}, we have $l\leq C_ln^{(1+D)/2-\epsilon}$ for some constants $C_l$ and $\epsilon>0$. For $n$ large enough such that $l<\frac{1}{2}\lfloor n^{1-\epsilon/2}\rfloor$, we split the sum of covariances into two parts:
\begin{align*}
&\frac{1}{N}\sum\limits_{i=1}^{N}\left|\Cov\left(1_{\left\{T_{l, 1}\leq t\right\}}, 1_{\left\{T_{l, i}\leq t\right\}}\right)\right|\\
&=\frac{1}{N}\sum\limits_{i=1}^{\lfloor n^{1-\epsilon/2}\rfloor}\!\!\left|\Cov\left(1_{\left\{T_{l, 1}\leq t\right\}}, 1_{\left\{T_{l, i}\leq t\right\}}\right)\right|
+\frac{1}{N}\sum\limits_{i=\lfloor n^{1-\epsilon/2}\rfloor+1}^{N}\!\!\!\!\left|\Cov\left(1_{\left\{T_{l, 1}\leq t\right\}}, 1_{\left\{T_{l, i}\leq t\right\}}\right)\right|\\
&\leq\frac{\lfloor n^{1-\epsilon/2}\rfloor}{N}+\frac{1}{N}\sum\limits_{k=\lfloor n^{1-\epsilon/2}\rfloor+1}^{N}\rho(\sigma(X_i, 1\leq i\leq l), \sigma(X_j, k\leq j\leq k+l-1))\\
&\leq \frac{\lfloor n^{1-\epsilon/2}\rfloor}{N}+\frac{1}{N}\sum\limits_{k=\lfloor n^{1-\epsilon/2}\rfloor-l}^{N-l-1}\rho(k, l),
\end{align*} 
where
\begin{align*}
\rho(k,l):=\rho\big(\sigma(X_i, 1\leq i\leq l), \sigma(X_j, k+l+1\leq j\leq k+2l)\big).
\end{align*}
Obviously, the first summand converges to zero by Assumption \ref{blocklength}. For the second summand note that as a consequence of Potter's Theorem (Theorem 1.5.6 in the book of Bingham, Goldie and Teugels \citep{Bingham1987}), there is a constant $C_L$ such that $L_{\gamma}(k)\leq C_Lk^{D\epsilon/2}$ for all $k\in \mathbb{N}$. This together with Lemma \ref{LemC} yields
\begin{align*}
&\frac{1}{N}\sum\limits_{k=\lfloor n^{1-\epsilon/2}\rfloor-l}^{N-l-1}\rho(k, l)\\
&\leq C_LC_1\frac{l^D}{N}\sum\limits_{k=\lfloor n^{1-\epsilon/2}\rfloor/2}^{N-l-1}k^{-D}k^{D\epsilon/2}+C_LC_2\frac{l^2}{N}\sum\limits_{k=\lfloor n^{1-\epsilon/2}\rfloor/2}^{N-l-1}k^{-D-1}k^{D\epsilon/2}\\
&\leq C_LC_1C_l^{D}2^{D(1-\epsilon/2)}n^{D\left(((1+D)/2-\epsilon)-(1-\epsilon/2)+\epsilon/2(1-\epsilon/2)\right)}\\
& \ \quad +C_LC_2C_l^22^{1+D(1-\epsilon/2)}n^{\left((1+D-2\epsilon)-(D+1)(1-\epsilon/2)+(1-\epsilon/2)D\epsilon/2\right)}\\
&\leq C\left(n^{-D\left((1-D)/2+\epsilon^2/4\right)}+n^{-\epsilon(\frac{3}{2}-D+D\epsilon/4)}\right)\xrightarrow{n\rightarrow\infty}0
\end{align*}
for some constant $C<\infty$. Thus, we have proved that $\Var(\hat{F}_{l, n}(t))\rightarrow 0$ as $n\rightarrow \infty$ and that the first conjecture of Theorem \ref{main result} holds.

The second assertion of Theorem \ref{main result} follows 
from
\begin{align*}
F_{T_n}(t)-\hat{F}_{l,n}(t)\overset{\mathcal{P}}{\longrightarrow}0
\end{align*}
by the usual Glivenko-Cantelli argument for the uniform convergence of empirical distribution functions; see for example section 20 in the book of Billingsley \cite{Billingsley1995}. \qed

\section*{Acknowledgements}

We thank the referee for his careful reading of the article and his thoughtful comments which lead to a significant improvement of the article. We also thank Norman Lambot for reading the article, thereby helping to reduce the number of misprints.

\bibliographystyle{amsplain}
\bibliography{PaperAM}
\pagebreak

\appendix
\section{A Modified Change Point Test for Data with Ties}\label{appA}

If the distribution of $X_i=G(\xi_i)$ is not continuous, there is a positive probability that $X_i=X_j$ for some $i\neq j$, so there might be ties in the sample. We propose to use the following test statistic based on the modified ranks $\tilde{R}_i=\sum_{j=1}^n(1_{\{X_j< X_i\}}+\frac{1}{2}1_{\{X_j= X_i\}})$:
\begin{equation*}
\tilde{T}_n(\tau_1, \tau_2):=\max_{k\in \left\{\lfloor n\tau_1\rfloor, \ldots,  \lfloor n\tau_2\rfloor\right\}}\frac{\left|\sum_{i=1}^k\tilde{R}_i-\frac{k}{n}\sum_{i=1}^n\tilde{R}_i\right|}{\Big\{\frac{1}{n}\sum_{t=1}^k \tilde{S}_t^2(1,k)+\frac{1}{n}\sum_{t=k+1}^n \tilde{S}_t^2(k+1,n)\Big\}^{1/2}}, 
\end{equation*}
where
\begin{equation*}
\tilde{S}_{t}(j, k)=\sum\limits_{h=j}^t\bigg(\tilde{R}_h-\frac{1}{k-j+1}\sum\limits_{i=j}^k\tilde{R}_i\bigg).
\end{equation*}
To be able to apply subsampling, we need $\tilde{T}_n$ to converge in distribution, which we will show now:

\begin{Lem}  Let $(\xi_n)_{n\in\N}$ be a stationary sequence of centered standard Gaussian variables with covariance function $\gamma(k)=k^{-D}L_\gamma(k)$ for a $D\in(0,1)$ and a slowly varying function $L_{\gamma}$. Let $X_i=G(\xi_i)$ for a function $G$,  piecewise monotone on finitely many pieces. Then $\tilde{T}_n(\tau_1, \tau_2)\Rightarrow T$ for some random variable $T$.
\end{Lem}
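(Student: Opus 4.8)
The plan is to realize $\tilde T_n(\tau_1,\tau_2)$ as a continuous functional of the two‑parameter sequential empirical process of $(X_i)$ and then to feed that process into a non‑central limit theorem. Writing $F_n(x)=\frac1n\sum_{j=1}^n1_{\{X_j\le x\}}$ and its left‑continuous analogue $F_n^-(x)=\frac1n\sum_{j=1}^n1_{\{X_j< x\}}$, the modified ranks satisfy $\tilde R_i=\frac n2\bigl(F_n(X_i)+F_n^-(X_i)\bigr)$, so that both the numerator $\sum_{i=1}^k\tilde R_i-\frac kn\sum_{i=1}^n\tilde R_i$ and the partial sums $\tilde S_t(j,k)$ in the denominator are expressible through the centred process
\begin{equation*}
\hat{G}_n(\lambda,x)=\frac1n\sum_{i=1}^{\lfloor n\lambda\rfloor}\bigl(1_{\{X_i\le x\}}-F(x)\bigr),\qquad 0\le\lambda\le1,\ x\in\R,
\end{equation*}
and its left‑continuous counterpart. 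First I would make this reduction explicit through a Hoeffding decomposition of the symmetrized two‑sample kernel $g(x,y)=\tfrac12(1_{\{y<x\}}+1_{\{y\le x\}})$, whose first‑argument Hájek projection is $\tilde F(x)=\tfrac12(F(x)+F^-(x))$ and whose second‑argument projection is $1-\tilde F(y)$. A short computation shows that, in the centred numerator and in $\tilde S_t$, the constant and the second‑argument projection cancel identically, so that to leading order every term of $\tilde T_n$ is a partial sum of the bounded subordinated functional $\tilde F(G(\xi_i))$, the degenerate remainder of the $U$‑statistic being of strictly smaller order under long range dependence.

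Next I would invoke the uniform reduction principle of Dehling and Taqqu for empirical processes of subordinated Gaussian sequences. Since $G$ is piecewise monotone on finitely many pieces, the maps $x\mapsto1_{\{G(\xi)\le x\}}$ are monotone in $x$ and the associated Hermite coefficient functions are of bounded variation, so the reduction holds uniformly in $x$. Writing $r$ for the Hermite rank of $\tilde F(G(\xi_1))-\E[\tilde F(G(\xi_1))]$ and $H=\max\{1-rD/2,1/2\}$ for the resulting Hurst parameter, the process $\hat G_n$, normalized by $n^{H}L_\gamma(n)^{r/2}$, converges weakly and uniformly in $(\lambda,x)$ to a limit of the multiplicative form $J(x)Z_{r,H}(\lambda)$. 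This product structure is decisive: substituting into the numerator and the self‑normalizer turns the spatial factor $J$ into a common constant $c$, and the continuous mapping theorem yields joint convergence of the normalized numerator to $c\bigl(Z_{r,H}(\lambda)-\lambda Z_{r,H}(1)\bigr)$ and of the normalized denominator to a quadratic functional with the same constant $c^2$ and the same normalizing sequence. Taking the supremum over $\lambda\in[\tau_1,\tau_2]$ and dividing, both $c$ and the normalization cancel, so $\tilde T_n$ converges in distribution to exactly the self‑normalized functional of $Z_{r,H}$ already displayed for $T_n(\tau_1,\tau_2)$; in particular the limit $T$ exists and is insensitive to the ties.

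The step I expect to be the main obstacle is controlling the symmetrization that distinguishes $\tilde R_i$ from $R_i$, i.e. the ties. Because $G$ may have flat pieces, $X$ possesses finitely many atoms and $\tilde F=F-\tfrac12 P(X=\cdot)$ differs from $F$ on a non‑null set; I must verify that $\tilde F(G(\xi_1))$ still has a finite Hermite rank and is non‑constant, so that $c\neq0$, the limiting denominator is almost surely strictly positive, and the ratio is well defined. I must also show that the left‑continuous process $F_n^-$ obeys the same reduction, with the same limit, as $F_n$; the discrepancy is driven by the partial sums of the step functions $1_{\{X_i=c\}}$ over the finitely many atoms $c$, which I would bound at a strictly smaller order. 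A secondary technical point is the continuous mapping argument itself: the self‑normalizer is a quadratic functional of the sequential process, so convergence must be established at the level of the path space under the sup‑norm on $[\tau_1,\tau_2]$, and one must check that the map sending the pair of limiting bridge processes to the supremum of their ratio is almost surely continuous, which follows once the limiting self‑normalizer is bounded away from $0$.
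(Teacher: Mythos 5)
Your proposal is correct in outline but takes a genuinely different route from the paper's. The paper does not decompose anything by hand: it encodes the ties directly into the two-sample kernel $h(x,y)=1_{\{G(x)<G(y)\}}+\frac{1}{2}1_{\{G(x)=G(y)\}}-\frac{1}{2}$, forms the modified Wilcoxon process $\tilde{W}_n(\lambda)=\frac{1}{nd_n}\sum_{i\leq\lfloor n\lambda\rfloor}\sum_{j>\lfloor n\lambda\rfloor}h(\xi_i,\xi_j)$, and imports its weak convergence wholesale from Theorem 2.2 of Dehling, Rooch and Wendler \citep{dehling2014two} --- this citation is exactly where the hypothesis that $G$ is piecewise monotone on finitely many pieces is consumed, since that theorem covers two-sample $U$-processes with discontinuous kernels of this type. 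It then repeats the argument of Theorem 1 in \citep{Betken2014}: $\tilde{T}_n(\tau_1,\tau_2)$ is a functional of $\tilde{W}_n$ with weights $c_n(\lambda)\rightarrow\lambda$ uniformly, and the continuous mapping theorem finishes. Your route --- the identity $\tilde{R}_i=\frac{n}{2}(F_n(X_i)+F_n^-(X_i))$, the Hoeffding decomposition of $g(x,y)=\frac{1}{2}(1_{\{y<x\}}+1_{\{y\leq x\}})$ with the observed cancellation of the constant and second-argument projections in the centred numerator and in $\tilde{S}_t$, and the Dehling--Taqqu uniform reduction giving the multiplicative limit $J(x)Z_{r,H}(\lambda)$ --- is essentially a from-first-principles reconstruction of the proof of the theorem the paper cites. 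What you buy is self-containedness and a formulation for general Hermite rank $r$; what the paper buys is that the hardest analytic step is certified by the reference (at rank-one normalization $d_n=\sqrt{\Var(\sum_{i=1}^n\xi_i)}$, with a fractional Brownian motion in the limit) rather than re-proved, and that the ties never need separate treatment because they are built into $h$ from the start.

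Three soft spots deserve naming. First, the one sentence you pass over quickly --- that the degenerate remainder is ``of strictly smaller order under long range dependence'' --- is the actual mathematical content of the step and does not follow routinely from the empirical-process reduction: your numerator involves $F_n$ and $F_n^-$ evaluated at the random points $X_i$, and controlling these $V$-statistic cross terms for discontinuous kernels is precisely what requires the piecewise-monotonicity hypothesis in \citep{dehling2014two}; a complete write-up must either reprove their estimates or cite them, as the paper does. Second, your Hermite-rank bookkeeping is slightly off: the uniform reduction principle operates with the rank $r$ of the indicator class $\{1_{\{X_i\leq x\}}-F(x),\ x\in\R\}$, and the limiting constant is of the form $\int J_r\,d\tilde{F}$, which need not coincide with the rank-$r$ coefficient of $\tilde{F}(G(\xi_1))$; you are right, however, that nondegeneracy of this constant (equivalently, a.s. positivity of the limiting denominator) is needed for the ratio --- a point the paper inherits silently from \citep{Betken2014} rather than verifying. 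Third, piecewise monotonicity does not imply that $X$ has \emph{finitely} many atoms: a single monotone piece may contain countably many flat stretches, so $X$ can have countably many atoms; this is harmless for your argument comparing $F_n^-$ with $F_n$, but the claim as stated is false and the bound should be organized over a countable set of atoms whose masses are summable.
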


\begin{proof} Let $h(x,y)=1_{\{G(x)< G(y)\}}+\frac{1}{2}1_{\{G(x)=G(y)\}}-\frac{1}{2}$. We define the modified Wilcoxon process $(\tilde{W}_n(\lambda))_{\lambda\in[0,1]}$ by
\begin{equation*}
\tilde{W}_n(\lambda):=\frac{1}{nd_n}\sum_{i=1}^{[n\lambda]}\sum_{j=[n\lambda]+1}^nh(\xi_i,\xi_j)
\end{equation*}
with $d_n=\sqrt{\Var(\sum_{i=1}^n\xi_i)}$. From Theorem 2.2 in Dehling, Rooch, Wendler \citep{dehling2014two}, we have the weak convergence of this process $\tilde{W}_n$ to the limit process $W$ with
\begin{multline*}
W(\lambda)\\
=-(1-\lambda)Z(\lambda)\!\int\!\varphi(x)d\tilde{h}(x)-\lambda(Z(1)-Z(\lambda))\!\int\!\Big(\int\! \varphi(y)dh(x,y)(y)\Big)\varphi(x)dx.
\end{multline*}
Here, $Z$ is a fractional Brownian motion, $\varphi$ is the density function of the standard normal distribution and $\tilde{h}(x)=E[h(x,\xi_i)]$. Following the proof of Theorem 1 in  \citep{Betken2014}, we can express $\tilde{T}_n(\tau_1, \tau_2)$ as a function of $\tilde{W}_n$:
\begin{multline*}
T_n(\tau_1, \tau_2)\\
 =\sup_{\tau_1\leq\lambda\leq \tau_2}\frac{\left|\tilde{W}_n(\lambda)\right|}{\big\{\int_0^\lambda (\tilde{W}_n(t)-\frac{c_n(t)}{c_n(\lambda)}\tilde{W}_n(\lambda))^2dt+\int_\lambda^{1} (\tilde{W}_n(t)-\frac{1-c_n(t)}{1-c_n(\lambda)}\tilde{W}_n(\lambda))^2dt\big\}^{1/2}}.
\end{multline*}
Note that $c_n(\lambda)$ converges to $\lambda$ uniformly, so we have  the asymptotic equivalence
\begin{multline*}
T_n(\tau_1, \tau_2)\\
 \approx\sup_{\tau_1\leq\lambda\leq \tau_2}\frac{\left|\tilde{W}_n(\lambda)\right|}{\big\{\int_0^\lambda (\tilde{W}_n(t)-\frac{t}{\lambda}\tilde{W}_n(\lambda))^2dt+\int_\lambda^{1} (\tilde{W}_n(t)-\frac{1-t}{1-\lambda}\tilde{W}_n(\lambda))^2dt\big\}^{1/2}}.
\end{multline*}
By the continuous mapping theorem, we get 
\begin{multline*}
T_n(\tau_1, \tau_2)\\
 \Rightarrow\sup_{\tau_1\leq\lambda\leq \tau_2}\frac{\left|W(\lambda)\right|}{\big\{\int_0^\lambda (W(t)-\frac{t}{\lambda}W(\lambda))^2dt+\int_\lambda^{1} (W(t)-\frac{1-t}{1-\lambda}W(\lambda))^2dt\big\}^{1/2}}=:T.
\end{multline*}
\end{proof}

\section{A Test for Multiple Change Points}\label{appB}

For testing the alternative hypothesis of two change points, we suggest to use the test statistic $T_n(\tau_1, \tau_2, \varepsilon)=\sup_{(k_1, k_2)\in \Omega_n(\tau_1, \tau_2, \varepsilon)}
\left|G_n(k_1, k_2)\right|$. Some calculations yield
\begin{align*}
&G_n(k_1, k_2)\\
\ignore{=&\frac{\left|\sum_{i=1}^{k_1}R_{i}^{(1)}-\frac{k_1}{k_2}\sum_{i=1}^{k_2}R_{i}^{(1)}\right|}{\bigg\{\frac{1}{n}\sum_{t=1}^{k_1} \left(S_{t}^{(1)}(1,k_1)\right)^2+\frac{1}{n}\sum_{t=k_1+1}^{k_2} \left(S_{t}^{(1)}(k_1+1,k_2)\right)^{(1)}\bigg\}^{1/2}}\\
&+\frac{\left|\sum_{i=k_1+1}^{k_2}R_{i}^{(2)}-\frac{k_2-k_1}{n-k_1}\sum_{i=k_1+1}^{n}R_{i}^{(2)}\right|}{\bigg\{\frac{1}{n}\sum_{t=k_1+1}^{k_2} \left(S_{t}^{(2)}(k_1+1,k_2)\right)^{2}+\frac{1}{n}\sum_{t=k_2+1}^{n} \left(S_{t}^{(2)}(k_1+1,n)\right)^{2}\bigg\}^{1/2}}\\}
\ignore{&=\frac{\left|W_n(\lambda_1, \lambda_1)-W_n(\lambda_1, \lambda_2)\right|}{\bigg\{\int_0^{\lambda_1} \left(W_n(r, r)-W_n(r, \lambda_2)-\frac{r}{\lambda_1}\left\{W_n(\lambda_1, \lambda_1)-W_n(\lambda_1, \lambda_2)\right\}\right)^2dr+ \int_{\lambda_1}^{\lambda_2}\left(W_n(r, r)-W_n(r, \lambda_2)-\frac{\lambda_2-r}{\lambda_2-\lambda_1}\left(W_n(\lambda_1, \lambda_1)-W_n(\lambda_1, \lambda_2)\right)\right)^2dr\bigg\}^{\frac{1}{2}}}\\
&+\frac{\left|W_n(\lambda_2, \lambda_2)-W_n(\lambda_1, \lambda_2)\right|}{\bigg\{\int_{\lambda_1}^{\lambda_2}\left(W_n(r, r)-W_n(\lambda_1, r)-\frac{r-\lambda_1}{\lambda_2-\lambda_1}\left(W_n(\lambda_2, \lambda_2)-W_n(\lambda_1, \lambda_2)\right)\right)^2dr+\int_{\lambda_2}^1\left(W_n(r, r)-W_n(\lambda_1, r)-\frac{1-r}{1-\lambda_2}\left\{W_n(\lambda_2, \lambda_2)-W_n(\lambda_1, \lambda_2)\right\}\right)^2dr\bigg\}^{\frac{1}{2}}}\\
&+o_P(1),}
=&\frac{\left|\tilde{W}_n(\lambda_1, \lambda_2)\right|}{\bigg\{\int\limits_0^{\lambda_1}\!\big(\tilde{W}_n(r, \lambda_2)\!-\!\frac{r}{\lambda_1}\tilde{W}_n(\lambda_1, \lambda_2)\big)^2dr+ \int\limits_{\lambda_1}^{\lambda_2}\!\big(\tilde{W}_n(r, \lambda_2)\!-\!\frac{\lambda_2-r}{\lambda_2-\lambda_1}\tilde{W}_n(\lambda_1, \lambda_2)\big)^2dr\bigg\}^{\frac{1}{2}}}\\
&+\frac{\left|W^*_n(\lambda_2, \lambda_1)\right|}{\bigg\{\int\limits_{\lambda_1}^{\lambda_2}\!\big(W^*_n(r, \lambda_1)\!-\!\frac{r\!-\!\lambda_1}{\lambda_2\!-\!\lambda_1}W^*_n(\lambda_2, \lambda_1)\big)^2dr+\int\limits_{\lambda_2}^1\!\big(W^*_n(r, \lambda_1)\!-\!\frac{1\!-\!r}{1\!-\!\lambda_2}W^*_n(\lambda_2, \lambda_1)\big)^2dr\bigg\}^{\frac{1}{2}}}\\
&+o_P(1),
\end{align*}
where
\begin{align*}
&\tilde{W}_n(\lambda, \tau):=W_n(\lambda, \lambda)-W_n(\lambda, \tau), \quad
W^*_n(\lambda, \tau):=W_n(\lambda, \lambda)-W_n(\tau, \lambda)
\end{align*}
with
\begin{align*}
W_n(\lambda, \tau)=\sum\limits_{i=1}^{\lfloor n\lambda\rfloor}\sum\limits_{j=\lfloor n\tau\rfloor +1}^n\left(1_{\left\{X_i\leq X_j\right\}}-\frac{1}{2}\right), \ 0\leq \lambda\leq\tau\leq 1.
\end{align*}
Define
\begin{align*}
d_n^2:=\Var\left(\sum\limits_{j=1}^nH_r(\xi_j)\right),
\end{align*}
where $H_r$ denotes the $r$-th order Hermite polynomial and $r$ designates the Hermite rank of the class of functions $\left\{1_{\left\{G(\xi_i)\leq x\right\}}-F(x), \ x\in \mathbb{R}\right\}$.
It can be shown that $\frac{1}{nd_n}W_n(\lambda, \tau)$ converges in distribution to
\begin{align*}
\left\{(1-\tau)Z_r(\lambda)-\lambda(Z_r(1)-Z_r(\tau))\right\}\frac{1}{r!}\int J_r(x)dF(x), \ 0\leq \lambda\leq\tau\leq 1,
\end{align*}
where $Z_r$ is an $r$-th order Hermite process with Hurst parameter $H:=\max\{1-\frac{rD}{2},\frac{1}{2}\}$
and where
\begin{align*}
J_r(x)=\E \left(H_r(\xi_i)1_{\left\{G(\xi_i)\leq x\right\}}\right).
\end{align*}
As a result, under the hypothesis the limiting distribution of $T_n(\tau_1, \tau_2, \varepsilon)$ is given by $T(r, \tau_1, \tau_2, \varepsilon)=\sup_{\tau_1\leq \lambda_1<\lambda_2\leq \tau_2, \ \lambda_2-\lambda_1\geq\varepsilon}G_{r}(\lambda_1, \lambda_2)$ with
\begin{align*}
&G_{r}(\lambda_1, \lambda_2)\\
\ignore{&=\frac{\left|\lambda_2 Z_r(\lambda_1)-\lambda_1 Z_r(\lambda_2)\right|}{\bigg\{\int\limits_{0}^{\lambda_1}\!\big(\lambda_2 \big(Z_r(t)-\frac{t}{\lambda_1}Z_r(\lambda_1)\big)\big)^2dt+\int\limits_{\lambda_1}^{\lambda_2}\!\big(\lambda_2 \big(Z_r(t)-\frac{t-\lambda_1}{\lambda_2-\lambda_1}Z_r(\lambda_2)-\frac{\lambda_2-t}{\lambda_2-\lambda_1}Z_r(\lambda_1)\big)\big)^2dt\bigg\}^{\frac{1}{2}}}\\
&+\frac{\left|Z_r(\lambda_2)-Z_r(\lambda_1)+\lambda_1\left(Z_r(1)-Z_r(\lambda_2)\right)-\lambda_2\left(Z_r(1)-Z_r(\lambda_1)\right)\right|}{\bigg\{\int\limits_{\lambda_1}^{\lambda_2}\!\big((1\!-\!\lambda_1)\big(Z_r(t)\!-\!\frac{\lambda_2\!-\!t}{\lambda_2\!-\!\lambda_1}Z_r(\lambda_1)\!-\!\frac{t\!-\!\lambda_1}{\lambda_2\!-\!\lambda_1}Z_r(\lambda_2)\big)\big)^2dt+\int\limits_{\lambda_2}^{1}\!\big((1\!-\!\lambda_1)\big(Z_r(t)\!-\!\frac{1\!\!-t}{1\!-\!\lambda_2}Z_r(\lambda_2)\!-\!\frac{t\!-\!\lambda_2}{1\!-\!\lambda_2}Z_r(1)\big)\big)^2dt\bigg\}^{\frac{1}{2}}}\\}
&=\frac{\left| Z_r(\lambda_1)-\frac{\lambda_1}{\lambda_2} Z_r(\lambda_2)\right|}{\bigg\{\int\limits\!_{0}^{\lambda_1}\big(Z_r(t)-\frac{t}{\lambda_1}Z_r(\lambda_1)\big)^2dt+\int\limits\!_{\lambda_1}^{\lambda_2}\big(Z_r(t)-\frac{t-\lambda_1}{\lambda_2-\lambda_1}Z_r(\lambda_2)-\frac{\lambda_2-t}{\lambda_2-\lambda_1}Z_r(\lambda_1)\big)^2dt\bigg\}^{\frac{1}{2}}}\\
&\quad +\frac{\left|Z_r(\lambda_2)-\frac{1-\lambda_2}{1-\lambda_1}Z_r(\lambda_1)-\frac{\lambda_2-\lambda_1}{1-\lambda_1}Z_r(1)\right|}{\bigg\{\int\limits_{\lambda_1}^{\lambda_2}\!\big(Z_r(t)-\frac{\lambda_2-t}{\lambda_2-\lambda_1}Z_r(\lambda_1)-\frac{t-\lambda_1}{\lambda_2-\lambda_1}Z_r(\lambda_2)\big)^2dt+\int\limits_{\lambda_2}^{1}\!\big(Z_r(t)-\frac{1-t}{1-\lambda_2}Z_r(\lambda_2)-\frac{t-\lambda_2}{1-\lambda_2}Z_r(1)\big)^2dt\bigg\}^{\frac{1}{2}}}.
\end{align*}

\end{document}